\newtheorem{theoremletter}{Theorem}
\newtheorem*{rep@theorem}{\rep@title}
\newcommand{\newreptheorem}[2]{%
\newenvironment{rep#1}[1]{%
 \def\rep@title{#2 \ref{##1}}%
 \begin{rep@theorem}}%
 {\end{rep@theorem}}}
\newtheorem{theorem}{Theorem}[section]
\newtheorem*{theorem*}{Theorem}
 \newtheorem*{conjecture*}{Conjecture}
  \newtheorem*{corollary*}{Corollary}
  \newtheorem{lemma}[theorem]{Lemma}
\newtheorem{corollary}[theorem]{Corollary}
\newtheorem{proposition}[theorem]{Proposition}
 \theoremstyle{definition}
 \newtheorem{definition}[theorem]{Definition} 
 \newtheorem{remark}[theorem]{Remark}
 \newtheorem{example}[theorem]{Example}
\numberwithin{equation}{section}
\newcommand {\N}{\mathbb{N}} 
\newcommand {\Z}{\mathbb{Z}} 
\newcommand {\R}{\mathbb{R}} 
\newcommand {\C}{\mathbb{C}} 
\newcommand{\G}{\mathbb{G}}
\newcommand{\DD}{\mathcal{D}}
\newcommand{\OO}{\mathcal{O}}
\newcommand{\Proj}{\mathbb{P}}
\DeclareMathOperator{\vol}{vol}
\DeclareMathOperator{\Ker}{Ker}
\DeclareMathOperator{\im}{Im}
\DeclareMathOperator{\rank}{rank}
\DeclareMathOperator{\mult}{mult} 
\DeclareMathOperator{\Tr}{Tr} 
\DeclareMathOperator{\length}{length}
\DeclareMathOperator{\divisor}{div} 
\DeclareMathOperator{\val}{val}
\begin{document}	
\title[Large unions of generalized integral points on elliptic surfaces]
{Large unions of generalized integral sections on elliptic surfaces}
\author[Xuan-Kien Phung]{Xuan Kien Phung}
\address{Universit\'e de Strasbourg, CNRS, IRMA UMR 7501, F-67000 Strasbourg, France}
\email{phung@math.unistra.fr}
\subjclass[2010]{}
\keywords{Siegel's theorem, elliptic surfaces, unit equations, generalized integral sections, function fields, hyperbolicity}

\begin{abstract}   
Let $f \colon X \to B$ be a nonisotrivial complex elliptic surface and let $\mathcal{D} \subset X$ 
be an integral divisor dominating $B$. 
We study finiteness related properties of generalized 
$(S, \mathcal{D})$-integral sections $\sigma \colon B \to X$ of $X$. 
These integral sections $\sigma$ correspond to rational points in $A(K)$ which satisfy   
the set-theoretic condition $f ( \sigma(B) \cap \mathcal{D})\subset S$, 
where $S \subset B$ is an arbitrary given subset. For $S \subset B$ finite, 
the set of $(S, \mathcal{D})$-integral sections of $X$ is finite 
by the well-known Siegel theorem.  
In this article, we establish a general quantitative finiteness result 
of several large unions of $(S, \mathcal{D})$-integral sections 
in which both the subset $S$ and the divisor $\mathcal{D}$ are allowed to vary 
in families where notably $S$ is not necessarily finite nor countable. 
Some applications to generalized unit equations over function fields are also given.   
\end{abstract}

\maketitle
 
\setcounter{tocdepth}{1}
\tableofcontents


\section{Introduction}

\label{chapter:strong-uniform-elliptic}


\textbf{Notations.}  
We   fix throughout a compact Riemann surface $B$ of genus $g$. 
Denote $K= \C(B)$ its function field. 
For every complex space $X$, 
the pseudo Kobayashi hyperbolic metric on $X$ is 
denoted by the symbol $d_{X}$ (cf. Definition \ref{pseudo Kobayashi hyperbolic metric}). 
 Discs in Riemannian surfaces are assumed to 
have sufficiently piecewise smooth boundary. 
The symbol $\#$ stands for cardinality.

\subsection{Generalized integral points}
 
In this paper, we investigate finiteness related properties of 
certain unions  of \emph{generalized integral points} in elliptic 
curves over function fields. The general definition 
is as follows. 

\begin{definition} 
[$(S, \DD)$-integral point and section] 
\label{d:s-d-integral-point-geometric}
Let $f \colon  X \to B$ be a proper flat morphism. 
Let $S \subset B$ be a subset (not necessarily finite) 
and let $\DD \subset X$ be a subset (not necessarily an effective divisor). 
A section $\sigma \colon B \to X$ is said to be $(S, \DD)$\emph{-integral}
 if it satisfies the set-theoretic condition (cf. Figure \ref{fig:integral-section}): 
\begin{equation}
\label{e:s-d-integral-point-geometric}
f(\sigma(B) \cap \DD) \subset S. 
\end{equation}
For every $P \in X_K(K)$, let $\sigma_P \colon B \to X$ 
be the corresponding section.  
Then $P$ is called an $(S, \DD)$\emph{-integral point} 
of $X_K$  
if the section $\sigma_P$ is $(S, \DD)$-integral. 
\end{definition} 

  \begin{figure}[ht]
  \centering
  \includegraphics[page=1,height=.25\textwidth, width=.52\textwidth]{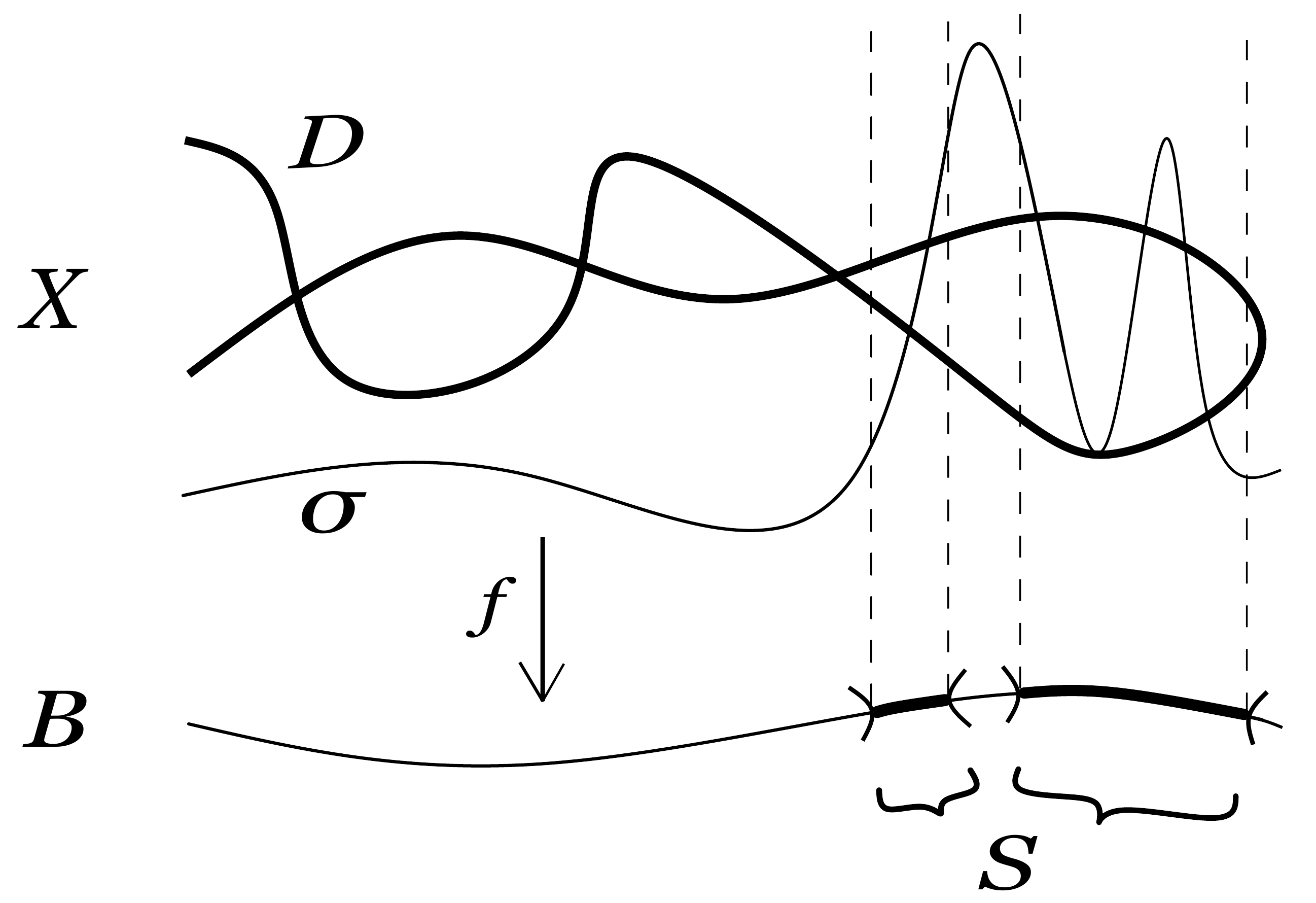}\hspace*{.0\textwidth}%
\caption{An $(S, \DD)$-integral section $\sigma$}
    \label{fig:integral-section}
\end{figure} 

The above notion of generalized integral points and sections are introduced 
and studied in a more general context of families of abelian varieties in \cite{phung-19-abelian}. 
It is not hard to see that Definition \ref{d:s-d-integral-point-geometric} generalizes 
the notion of integral solutions of a system of Diophantine equations 
when the subsets $S\subset B$ are finite 
(cf., for example, \cite[Introduction]{phung-19-phd}). 
\par

\par
Let $d$ be a Riemannian metric on $B$, the following 
general quantitative result is shown in \cite{phung-19-abelian} 
which generalizes a theorem of Parshin in \cite{parshin-90}.

\begin{theorem} 
\label{t:parshin-generic-emptiness-higher-dimension} 
Let $A / K$ be an abelian variety with a model $f \colon \mathcal{A} \to B$. 
Let $\DD \subset \mathcal{A}$ be the Zariski closure of an effective ample divisor $D \subset A$. 
Assume that $D$ does not contain any translate of nonzero abelian subvarieties of $A$.   
Let  $\varepsilon >0$. 
There exists a finite union of disjoint closed discs $Z_\varepsilon \subset B$ with $\vol_d Z_\varepsilon < \varepsilon$ 
satisfying the following. 
Let $W $ be a finite union of disjoint closed discs in $B$ disjoint to $Z_\epsilon$, let 
$B_0= B \setminus (W\cup Z_\varepsilon)$.  
There exists $m >0$ such that: 
\begin{enumerate} [\rm (*)] 
\item
For $I_s$ ($s \in \N$) the union of  $(S, \DD)$-integral points of $A$   
over all subsets $S \subset B$ such that  $\# \left(S \cap B_0 \right) \leq s$, we have: 
\begin{equation}
\label{e:qualitative-bound-abelian}
\# I_s  < m(s+1)^{2 \dim A . \rank \pi_1(B_0)}, \quad \text{ for every } s \in \N. 
\end{equation} 
\end{enumerate}
\end{theorem}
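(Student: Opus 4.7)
The plan is to combine the Kobayashi hyperbolicity of the complement $A \setminus D$ with a counting of monodromy representations of the fundamental group of a punctured surface.

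First, I will invoke the hyperbolicity input: under the hypotheses that $D$ is an effective ample divisor containing no translate of a nonzero abelian subvariety of $A$, the complement $A \setminus D$ is Kobayashi hyperbolic and $D$ is hyperbolically embedded in $A$ (Siu--Yeung, Noguchi--Winkelmann). This relativizes: there is a finite set $T \subset B$ (supporting the singular fibers of $f$, the non-flat locus of $\mathcal{D}$, and the locus where the pair $(A_b, D_b)$ degenerates) outside of which the pairs $(A_b, D_b)$ form a smooth family of hyperbolically embedded pairs. I take $Z_\varepsilon$ to be a union of disjoint closed discs around $T$ of total $d$-volume less than $\varepsilon$; this is possible since $T$ is finite.

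Second, I will extract rigidity from this hyperbolicity. Given an $(S, \mathcal{D})$-integral section $\sigma$, the restriction of $\sigma$ to $B_0 \setminus (S \cap B_0)$ is a holomorphic section of $(\mathcal{A} \setminus \mathcal{D}) \to B_0 \setminus (S \cap B_0)$, i.e.\ a holomorphic map into a family whose fibers are uniformly hyperbolic. Combined with the distance-decreasing property of the Kobayashi pseudo-metric and a Brody-type normal family argument, this forces $\sigma$ to be determined, up to uniformly finitely many options, by its induced homomorphism
\[
\rho_\sigma \colon \pi_1\bigl(B_0 \setminus (S \cap B_0), b\bigr) \longrightarrow \mathcal{H}_b \cong \Z^{2 \dim A},
\]
where $\mathcal{H} := R_1 f_* \Z$ is the local system of first integral homology of the fibers on $B_0$.

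Third, I will count these representations. Since $B_0$ is a compact Riemann surface with boundary, $\pi_1(B_0 \setminus (S \cap B_0))$ is free of rank at most $r_0 + s$, where $r_0 := \rank \pi_1(B_0)$. Loops around the $\leq s$ punctures in $S \cap B_0$ are constrained to map into a uniformly bounded subset of $\Z^{2 \dim A}$, because the local intersection multiplicity of $\sigma$ with $\mathcal{D}$ at each puncture is bounded by a constant independent of $S$ (depending only on the relative geometry of $\mathcal{D} \to B$). On the $r_0$ generators of $\pi_1(B_0)$ itself, the values of $\rho_\sigma$ lie in a set of size $O((s+1)^{2 \dim A})$, since the coordinates of the image vectors are controlled linearly by the total intersection $\sigma \cdot \mathcal{D}$ over $B_0$, which is bounded by $c(s+1)$ via the integrality condition and the ampleness of $D$. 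Multiplying these per-generator bounds yields at most $m (s+1)^{2 \dim A \cdot r_0}$ admissible sections.

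The main obstacle is the quantitative rigidity in the second step: one must show that the ``uniformly finitely many options'' constant is independent of $W$ and $S$. This requires promoting fiberwise Kobayashi hyperbolicity to a uniform hyperbolic embedding of $\mathcal{D}$ in $\mathcal{A}$ over $B \setminus Z_\varepsilon$ (this is what dictates the shape of $Z_\varepsilon$), and then combining it with a Brody-compactness argument inside the space of integral sections. A secondary technical point is the uniform linear-in-$s$ bound on $\sigma \cdot \mathcal{D}$ along the $r_0$ topological generators of $\pi_1(B_0)$, which uses the disjointness of $W$ from $Z_\varepsilon$ and the properness of $\mathcal{A} \to B$ over a neighborhood of $W$.
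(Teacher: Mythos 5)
Your overall architecture — Kobayashi hyperbolicity of $A\setminus D$ relativized over $B_0$, followed by a reduction to counting monodromy/homotopy data in $\Z^{2\dim A}$, then a per-generator product bound — is the right skeleton, and it does match the ``hyperbolic-homotopic height machinery'' of \cite{phung-19-abelian} that the paper is citing. Your first step (choosing $Z_\varepsilon$ around the bad locus and getting a uniform hyperbolic embedding over $B_0$) and your use of Parshin-type rigidity to replace sections by monodromy data are both in the right direction, and your identification of the main obstacles (uniform rigidity, and linearity in $s$) is accurate.

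The genuine gap is in your third step, where you assert that the values of $\rho_\sigma$ on the $r_0$ generators of $\pi_1(B_0)$ lie in a set of size $O((s+1)^{2\dim A})$ because ``the coordinates of the image vectors are controlled linearly by the total intersection $\sigma \cdot \mathcal{D}$ over $B_0$.'' This inference is not justified and I believe it is false as stated. The intersection number $\sigma \cdot \mathcal{D}\vert_{B_0}$ counts how many times the section crosses $\mathcal{D}$ over $B_0$; the homology class of $\sigma(\gamma_j)$ in $H_1$ of a fiber measures how much the section \emph{winds} in the fiber direction along $\gamma_j$. A section can wind arbitrarily much along $\gamma_j$ (producing large homology classes) while never meeting $\mathcal{D}$ over $B_0$ at all, so a bound on the intersection over $B_0$ does not bound the fiber homology of $\sigma(\gamma_j)$. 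The paper explicitly flags that the intersection-height route breaks down in this setting; this is precisely the step it is warning about. The mechanism that actually controls the homology classes is different: one first proves a linear (in $\#S$) bound on the \emph{hyperbolic length} of representatives of each free homotopy class $\alpha_j$ in $B_0\setminus S$ (Theorem \ref{t:linear-bound-s-base-curve-1}), then uses Green's hyperbolic embedding theorem (Theorem \ref{t:green-hyperbolic-embedding}) together with the totally-geodesic property of sections (Lemma \ref{l:section-geodesic}) and the distance-decreasing property to convert this into a bound on $\length_\rho(\sigma(\gamma_j))$ inside the \emph{compact} manifold $X_{B_0}$. Because loops of bounded $\rho$-length in a compact manifold represent only boundedly many homotopy classes, and the fiber has $H_1\simeq\Z^{2\dim A}$, one then gets $O((s+1)^{2\dim A})$ classes per generator. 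Until you replace your intersection-theoretic claim with this hyperbolic-length argument (or supply an actual proof that intersection over $B_0$ bounds the fiberwise winding, which I do not think exists), the per-generator count and hence the final exponent $2\dim A\cdot\rank\pi_1(B_0)$ are unproved.

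A secondary, smaller point: your reduction object is a homomorphism $\pi_1(B_0\setminus(S\cap B_0))\to\Z^{2\dim A}$, whose domain varies with $S$. The paper instead fixes the domain $\pi_1(B_0,b_0)$ and works with set-theoretic sections of the exact sequence $0 \to \pi_1(X_{b_0}) \to \pi_1(X_{B_0}) \to \pi_1(B_0) \to 0$ (Proposition \ref{p:homotopy-rational-abelian}), which makes the ``finitely many options'' statement crisp (the fiber is at most $\#X_K(K)_{\mathrm{tors}}$) and independent of $S$. Your version is morally equivalent but would need extra care to make the constant uniform as you acknowledge.
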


Here, a model of $A$ means a N\' eron model or a proper flat morphism $\mathcal{A} \to B$ 
such that $\mathcal{A}_K\simeq A$ and $\rank \pi_1(B_0) \in \N$ denotes the minimal number of generators of 
$\pi_1(B_0)$. 
\par
Let the notations be as in Theorem \ref{t:parshin-generic-emptiness-higher-dimension}. 
The trace $\Tr_{ k(\bar{C} )/ k}(A)(\C)$ of $A$ (cf. \cite{chow-image-trace-1}, \cite{chow-image-trace-2})  acts naturally on $A(K) = \mathcal{A}(B)$ by translations. 
If $A/K$ is a nonisotrivial elliptic  curve,   its trace is zero and the condition 
$D$ not containing any translate of nonzero abelian subvarieties is empty.  
In this case, 
Theorem \ref{t:parshin-generic-emptiness-higher-dimension} shows  
that the union $I_s$ of $(S, \DD)$-integral points is finite and grows at most as a polynomial 
in terms of $s$. 
We remark moreover that the growth order $2 \dim A.\rank \pi_1(B_0)$ in Theorem \ref{t:parshin-generic-emptiness-higher-dimension} 
 is a reasonably optimal order that one can possibly expect in general (cf. \cite[Remark 1.10]{phung-19-abelian}).  
\par
Classical approaches via height theory 
can establish the finiteness as well as the polynomial growth of $I_s$ only when 
the set $W\cup Z_\varepsilon$ in Theorem \ref{t:parshin-generic-emptiness-higher-dimension} 
is finite. This is the content of the generalized Siegel theorem over function fields (cf. \cite{lang-60}) for 
integral points of bounded denominators (cf., for example, \cite[Corollary 1.7]{phung-19-parshin-integral}, 
\cite[2.9]{shioda-schutt-lecture}). 
While the intersection height theory works over an arbitrary field (cf. \cite{fulton-98}), 
the best we can obtain with this tool is a uniform bound on the intersection multiplicities 
(as in \cite{buium-94}, \cite[Theorem 0.6]{hindry-silverman-88}, \cite{noguchi-winkelmann-04}). 
For this reason, whenever $S$ is infinite, the height with respect to the divisor $\DD$ of an $(S, \DD)$-integral point, 
which is the sum of the intersection multiplicities over $S$, cannot be simply bounded 
using the uniform bound on the intersection multiplicities. 
\subsection{Main result} 
In this article, we   develop the technique studied in \cite{phung-19-abelian}  
in the case of elliptic curves. 
It turns out that we can obtain in this situation  
a very strong property on the finiteness of certain unions $J_s$ "twice larger" than $I_s$ (cf. \eqref{e:qualitative-bound-abelian})
consisting of $(S, \DD)$-integral points in which both the set $S$ and the divisor  $\DD$ are allowed to vary in families. 
Moreover, it is shown that the growth of $J_s$ in terms of $s$ is still 
at most polynomial of degree $2   \pi_1(B_0)$ as in Theorem \ref{t:parshin-generic-emptiness-higher-dimension}.   
Recall that an effective divisor $D$ on a fibered variety $X$ over $B$ is called \emph{horizontal} 
if the induced map $D \to B$ is dominant. 
Otherwise, $D$ is said to be \emph{vertical}. 
\par
Our setting is as follows. 

\subsection*{Setting (E)} 
Fix a nonisotrivial elliptic surface $f \colon X \to B$.
 Denote by  $T \subset B$   the \emph{type} of $X$, i.e., the finite subset above
which the fibres of $f$ are not smooth. 
Let  $\tilde{Z}$ be a  smooth complex  algebraic variety. 
Let $ {\DD} \subset X \times \tilde{Z}$ be an algebraic family of relative horizontal effective Cartier  
divisors. Assume that $\DD \to B \times \tilde{Z}$ is flat. 
Let $Z \subset \tilde{Z}$ be a relatively compact subset  
with respect to the complex topology.  
\par
The first main result of the article is the following (cf. Section \ref{s:strong-uniform-elliptic}). 
  
\begin{theoremletter}
\label{t:strong-uniform-elliptic} 
Let the notations and hypothesis be as in Setting (E). 
Consider any finite union of disjoint closed discs
$V \subset B$ containing $T$ such that distinct points in $T$ are contained in different discs. 
For each $s \in \N$, the following union of integral points 
\begin{align*}
J_s \coloneqq \cup_{z \in Z} \cup_{S \subset B, \# (S \setminus V) \leq s} 
\{(S, \DD_z)\text{-integral points of } X_K  \} \subset X_K(K)
\end{align*}
is finite. Moreover, there exists $m > 0$ such that for every $s \in \N$, we have:
\begin{equation*}
\label{e:qualitative-bound-elliptic-strong}
\#J_s \leq m(s+1)^{2\rank \pi_1(B \setminus V)}. 
\end{equation*}
 \end{theoremletter}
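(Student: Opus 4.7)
The plan is to adapt the Kobayashi-hyperbolic strategy underpinning Theorem~\ref{t:parshin-generic-emptiness-higher-dimension} to the parametric elliptic setting, exploiting two features special to our situation: the fiber dimension is one, and the nonisotriviality of $f$ forces the $K/\C$-trace of the generic curve to vanish, so that distinct sections are never $\C$-translates of each other. First I would set $B_0 \coloneqq B \setminus V$, a bordered Riemann surface that avoids every singular fiber of $f$. Given an $(S,\DD_z)$-integral section $\sigma$ with $\#(S \setminus V) \leq s$, the set $F_\sigma \coloneqq f(\sigma(B_0) \cap \DD_z)$ has cardinality at most $s$; enclosing $F_\sigma$ in a disjoint union $U_\sigma \subset B_0$ of small open discs, the restriction $\sigma \colon B_0 \setminus U_\sigma \to X \setminus \DD_z$ is holomorphic and lifts the identity of $B_0 \setminus U_\sigma$.

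Next I would prove a uniform hyperbolicity estimate for the family $\{X \setminus \DD_z\}_{z \in Z}$. For each $z \in Z$ and each smooth fiber $X_b$ with $b \in B_0$, the intersection $X_b \cap \DD_z$ is nonempty and finite since $\DD_z$ is horizontal and effective, so $X_b \setminus \DD_z$ is a punctured elliptic curve and hence Kobayashi hyperbolic. Combining this fiberwise hyperbolicity with the nonisotriviality of $f$, the relative compactness of $Z \subset \tilde Z$, and the flatness of $\DD \to B \times \tilde Z$, one would extract a single positive lower bound on the Kobayashi pseudo-distance $d_{X \setminus \DD_z}$ restricted to $f^{-1}(B_0)$, uniform in $z \in Z$. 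Applied to $\sigma \colon B_0 \setminus U_\sigma \to X \setminus \DD_z$, whose domain is itself Kobayashi hyperbolic, the distance-decreasing principle for holomorphic maps then forces the image of every such $\sigma$ to lie in a fixed compact subset $K \subset X$ depending only on $f$ and the family $\DD \to B \times Z$, independently of $\sigma$, $z$, and $S$.

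I would conclude by counting homotopy classes. Since the trace of the generic fiber is trivial, any two sections $B \to X$ homotopic as continuous maps must coincide, so it suffices to bound the number of realized homotopy classes of sections whose image is contained in the compact region $K$. A homotopy class is encoded by the monodromy vector $\rho_\sigma \in (\Z^2)^{\rank \pi_1(B_0)}$ describing the action of each of the $\rank \pi_1(B_0)$ generators of $\pi_1(B_0)$ on the fiber $\pi_1(X_b) \cong \Z^2$; the a priori bound on the image in $K$, transported to the universal cover, forces each of the $2\rank \pi_1(B_0)$ integer coordinates of $\rho_\sigma$ into an interval of length $O(s+1)$, yielding at most $m(s+1)^{2\rank \pi_1(B \setminus V)}$ possibilities. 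The hardest step will be Step~2: establishing the uniform hyperbolicity across the parameter space and ensuring that $d_{X \setminus \DD_z}$ does not collapse as $z$ approaches the boundary of $Z$ in $\tilde Z$. This relies crucially on the relative compactness of $Z$ together with the flatness of $\DD$, which prevents components of $\DD_z$ from disappearing or jumping in degree as $z$ varies.
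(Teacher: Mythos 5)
Your overall plan -- use the distance--decreasing property of the Kobayashi metric along with a uniform hyperbolicity estimate for the family $\{X\setminus\DD_z\}_{z\in Z}$, then count homotopy sections of the fibered fundamental-group sequence -- is indeed the paper's strategy, but two of your steps contain genuine gaps. First, in Step 2 you assert that relative compactness of $Z$ plus flatness of $\DD$ yields a single lower bound $d_{X\setminus\DD_z}\geq c\,\rho$ on all of $f^{-1}(B_0)$, uniformly in $z\in Z$. This is the crux of the whole argument and it cannot simply be extracted; the divisors $\DD_z$ in general have ramification points and singular points of $(\DD_z)_{\mathrm{red}}$ over $B_0$, and the Brody-reparametrization/translation argument needed to establish a lower bound breaks down precisely there (the local translation $\Psi_z$ by a holomorphic branch $s_z$ of $\DD$ only exists where $\DD\to B\times\tilde Z$ is \'etale). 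The paper's key Lemma~\ref{l:key-lemma-strong-uniform-elliptic} addresses this by removing from $B_0$ an extra finite disjoint union $V_i$ of discs (with locations depending on $z_i$) covering the ramification and singular loci of $\DD_{z_i}$, and the crucial point is that the \emph{number} of such discs is bounded by a constant $M$ depending only on the numerical class $[\DD_z]$ via intersection theory and the adjunction formula, so that a compactness-covering argument can still produce a single constant $c_*>0$. Without this step your estimate is unsupported and the proof does not close.

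Second, your Step 3 rests on the ``a priori bound on the image in $K$, transported to the universal cover'' to obtain the $(s+1)$-intervals. But the image of any section lies in the compact surface $X$ to begin with, so this gives nothing. What actually produces the linear factor $(s+1)$ is Theorem~\ref{t:linear-bound-s-base-curve-1}: for each generator $\alpha_j$ of $\pi_1(B_0)$ there is a representative loop $\gamma_j\subset B_i\setminus S$ with $\length_{d_{B_i\setminus S}}(\gamma_j)\leq L(s+1)$. Because $\sigma_P(B_i\setminus S)$ is a totally geodesic subspace of $(X\setminus\DD_z)\vert_{B_i\setminus S}$ (Lemma~\ref{l:section-geodesic}), pushing $\gamma_j$ forward and applying the uniform lower bound on the Kobayashi metric controls $\length_\rho(\sigma_P(\gamma_j))\leq c_*^{-1}L(s+1)$, and only then does a lattice-point count in $\pi_1(X_{b_0})\cong\Z^2$ give $(s+1)^{2\rank\pi_1(B_0)}$ homotopy sections. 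Finally, a smaller point: ``homotopic sections must coincide'' is not what one uses; the map $P\mapsto i_P$ is only $t_X$-to-one where $t_X=\#X_K(K)_{\mathrm{tors}}$ (Proposition~\ref{p:homotopy-rational-abelian}), which is a bounded constant and suffices, but it is not injective.
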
 

\begin{remark}
In fact, the exact same proof of Theorem \ref{t:strong-uniform-elliptic} presented in this article shows that 
the conclusion of Theorem \ref{t:strong-uniform-elliptic} also holds when $X \to B$ is an isotrivial 
elliptic surface, up to replacing $\# J_s$ by $\# (J_s \text{ mod } \Tr_{K / \C}(X_K)(\C))$.   
\end{remark}

The proof of Theorem \ref{t:strong-uniform-elliptic} 
is a combination of the hyperbolic-homotopic method developed in \cite{phung-19-abelian} 
with a technical Lemma \ref{l:key-lemma-strong-uniform-elliptic} which controls locally 
the hyperbolic metric on certain smaller subsets of $X \setminus \DD_z$ when $z$ varies. 
In particular, our proof does not use the usual height theory and thus no height bound is established.

\begin{remark}
\label{r:miracle-flaness}
By the miracle flatness theorem (cf. \cite[Lemma 00R4]{stack-project}, \cite[Theorem 23.1]{matsumura-crt}), 
the condition requiring $\DD \to B \times \tilde{Z}$ to be flat in Setting (E) is equivalent 
to the condition saying that for every $z \in \tilde{Z}$, the divisor $\DD_z$ contains no vertical components 
(or also by \cite[Proposition 3.9]{liu-alg-geom} since $B$ is an algebraic curve).  
\end{remark}

Theorem \ref{t:strong-uniform-elliptic} can be seen as a certain generalization 
of the following theorem of Hindry-Silverman (cf. \cite[Theorem 0.6]{hindry-silverman-88})) which 
is obtained using the   N\' eron-Tate height theory. 

\begin{theorem*}
[Hindry-Silverman] 
Let $(O)$ be the zero section in $X$ and 
$r= \rank X_K(K)$. 
There exists an explicit function $c(g)>0$ such that:  
\begin{equation*} 
\# \{(S, (O))\text{-integral points of }X_K\} \leq c(g)((\#S)^{1/2} +1)^r. 
\end{equation*} 
\end{theorem*}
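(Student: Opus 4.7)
The plan is to prove the Hindry--Silverman bound by the classical N\'eron--Tate height / Mordell--Weil lattice method, which is the natural approach since the stated exponent $r = \rank X_K(K)$ is arithmetic (rather than geometric) in nature. Equip the free part of $X_K(K)$ with the canonical height $\hat{h}$, a positive definite quadratic form of rank $r$, and let $\|P\| := \hat{h}(P)^{1/2}$ denote the induced Euclidean norm on $X_K(K)\otimes\R \cong \R^r$. The counting problem is then recast as estimating the number of lattice points in a Euclidean ball of $\R^r$ subject to a pairwise angular separation constraint.

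The first ingredient is a linear height bound: there exist $C_1(g), C_2(g) > 0$, depending only on $g$ (and on the minimal model of $X$), such that $\hat{h}(P) \leq C_1(g)\cdot \#S + C_2(g)$ for every $(S,(O))$-integral point $P$. This is proved by interpreting $\hat{h}(P)$ as the intersection number $\sigma_P\cdot(O)$ on a minimal model of $X$, up to local error terms localized at the singular fibres above $T$. Since $\sigma_P(B)\cap (O)$ lies above $S$ by integrality, and the local intersection multiplicities at each point of $S$ together with the local height corrections at $T$ are a priori bounded uniformly in terms of $g$ (via the Tate algorithm and the explicit decomposition of the local N\'eron symbols), summation over $S$ gives the claim. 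Consequently every integral point lies in the Euclidean ball $B_R \subset \R^r$ of radius $R = O_g((\#S)^{1/2}+1)$.

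The second, more delicate, ingredient is an angular gap principle: there exists $\delta(g) > 0$ such that any two distinct $(S,(O))$-integral points $P, Q$ with $\hat{h}(P), \hat{h}(Q) \geq \kappa(g)\cdot\#S$ satisfy
\begin{equation*}
\langle P,Q\rangle \leq (1-\delta(g))\,\|P\|\,\|Q\|.
\end{equation*}
This follows from the parallelogram law $\hat{h}(P-Q) = \hat{h}(P)+\hat{h}(Q)-2\langle P,Q\rangle$ combined with an upper bound on $\hat{h}(P-Q)$: although $P-Q$ need not itself be strictly $(S,(O))$-integral, the group law on the elliptic surface forces $\sigma_{P-Q}$ to meet $(O)$ only above $S \cup T$, which yields another linear-in-$\#S$ estimate on $\hat{h}(P-Q)$. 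Comparing this upper bound to the lower bound $\hat{h}(P)+\hat{h}(Q) \gtrsim \#S$ forces the angular separation.

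The conclusion is a sphere-packing argument. Cover the unit sphere in $\R^r$ by at most $c(g)^r$ spherical caps of half-angle $\tfrac{1}{2}\arccos(1-\delta(g))$; the gap principle permits at most one integral point per cap in each dyadic annulus $\{2^{-k-1}R \leq \|P\| \leq 2^{-k}R\}$ of $B_R$, and only $O_g(1)$ annuli contribute (below the threshold $\hat{h}(P) \leq \kappa(g)\cdot\#S$, the total number of integral points is $O_g(1)$ by a standard argument). Multiplying these counts gives the desired bound $\leq c(g)((\#S)^{1/2}+1)^r$. The main obstacle is arranging that the constants $C_1, C_2, \delta, \kappa$ depend only on $g$, independently of the Mordell--Weil rank $r$ and of auxiliary data of the particular surface $X$; this uniformity requires explicit uniform estimates for local N\'eron heights and intersection multiplicities at bad fibres, and constitutes the technical heart of the original Hindry--Silverman argument.
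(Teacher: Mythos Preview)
The paper does not itself prove this statement: it is quoted from Hindry--Silverman (reference \texttt{hindry-silverman-88}, Theorem~0.6) with the sole comment that it is ``obtained using the N\'eron--Tate height theory.'' So there is no proof in the paper to compare against, only the indication that the canonical-height framework you adopt is indeed the intended one.

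That said, your gap-principle step is broken in two places. First, the assertion that ``the group law on the elliptic surface forces $\sigma_{P-Q}$ to meet $(O)$ only above $S\cup T$'' is false: $\sigma_{P-Q}$ meets $(O)$ precisely at those $b\in B$ where $P(b)=Q(b)$ in the fibre $X_b$, and nothing constrains such coincidences to lie over $S\cup T$. Second, and independently, the inequality runs the wrong way: from $2\langle P,Q\rangle=\hat h(P)+\hat h(Q)-\hat h(P-Q)$, an \emph{upper} bound on $\hat h(P-Q)$ yields a \emph{lower} bound on $\langle P,Q\rangle$, which is the opposite of the angular separation $\langle P,Q\rangle\le(1-\delta)\|P\|\,\|Q\|$ you are trying to establish.

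In fact the Mumford-style gap principle is not needed here. The main theorem of Hindry--Silverman in that paper is an explicit lower bound $\hat h(P)\ge c_0(g)>0$ for every non-torsion $P\in X_K(K)$ (the function-field case of Lang's height conjecture). Together with your first ingredient $\hat h(P)\le C_1(g)\,\#S+C_2(g)$ for $(S,(O))$-integral $P$, which is correct, one simply packs disjoint balls of radius $\tfrac12 c_0(g)^{1/2}$ around the lattice points inside the ball of radius $(C_1\#S+C_2)^{1/2}$ in $X_K(K)\otimes\R\cong\R^r$; the volume ratio gives at most $\bigl(2(C_1\#S+C_2)^{1/2}/c_0(g)^{1/2}+1\bigr)^r$ non-torsion integral points, and bounding the torsion by a constant depending only on $g$ yields $c(g)((\#S)^{1/2}+1)^r$ directly.
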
 

The above result of Hindry-Silverman holds when $B$ is defined over any algebraically closed field $k$ 
of characteristic $0$. But in the case $k= \C$, our Theorem \ref{t:strong-uniform-elliptic} 
gives a much stronger finiteness result  
without losing the polynomial bound of reasonable degree order. 

\par
To conclude, some applications to generalized integral sections on ruled surfaces 
(cf. Theorem \ref{t:s-unit-equation-geometric-general})  
and to generalized unit equations over function fields (cf. \eqref{e:unit-equation-geometric-general},  
Corollary \ref{c:s-unit-equation-geometric-general-1}, 
Corollary \ref{c:few-solution-generalized-unit}) are 
also given in Section \ref{s:unit-equation-hyperbolic}.

\section{The hyperbolic-homotopic height machinery} 

We collect and formulate in this section the tools that are necessary for the article.

\subsection{The homotopy reduction step of Parshin}  

We continue with the notations as in Setting (E).  
We can suppose without loss of generality that $X_K[n] \subset X_K(K)$ for some integer $n \geq 2$. 
 Let $U$ be a disjoint union of closed discs in $B$ 
such that any two distinct points in $T$ are contained in different discs. 
Let $b_0 \in B_0 \coloneqq B \setminus U$. 
Denote $\Gamma = H_1(X_{b_0}, \Z)$ and $G= \pi_1(B \setminus U, b_0)$. 
\par 
Since $X_{B_0} \to B_0$ is a proper submersion, 
it is a fibre bundle by Ehresmann's fibration theorem (cf. \cite{ehresmann}). 
It follows that we have an exact sequence of fundamental groups 
induced by the fibre bundle $ X_{b_0} \to X_{B_0} \to B_0$ of $K(\pi, 1)$-spaces:  
\begin{equation}
\label{e:abelian-homotopy-exact-sequence-1}
0 \to \pi_1(X_{b_0}, w_0)=H_1(X_{b_0}, \Z) \to \pi_1(X_{B_0}, w_0) \xrightarrow{\rho \,= f_*} \pi_1(B_0,b_0) \to 0. 
\end{equation}

To fix the ideas, $w_0$ is chosen here and in the rest of the article to be the 
zero point of $X_{b_0}$, which also lies on the zero section of $X_{B_0}$.    
\par
We  fix a collection of smooth geodesics  $l_{w_0,w} \colon [0,1] \to X_{b_0}$ 
on the torus $X_{b_0}$ 
such that $l_{w_0,w} (0)=w_0$ and $l_{w_0,w} (1)=w$. 
Every (analytic or algebraic) section $\sigma_P \colon B_0 \to X_{B_0}$ 
gives rise to a section $i_P \colon \pi_1(B_0,b_0) \to \pi_1(X_{B_0}, w_0)$ of the exact sequence 
\eqref{e:abelian-homotopy-exact-sequence-1} as follows. 
Given any loop $\gamma$ of $B_0$ based at $b_0$, 
we define the section $i_P$ by the following formula: 
\begin{equation}
\label{e:definition-of-i-p-main-reduction-step}
i_P([\gamma])=[ l^{-1}_{w_0,\sigma_P(b_0)} \circ \sigma_P(\gamma) \circ l_{w_0,\sigma_P(b_0)}] 
\in  \pi_1(X_{B_0}, w_0). 
\end{equation} 

Note that as oppose to the   composition of homotopy classes, 
 we  concatenate oriented paths as maps as above,  
so the order reverses. 
The quantitative version 
of the homotopy reduction step of Parshin can now be stated 
in our context as follows: 
 
\begin{proposition}
\label{p:homotopy-rational-abelian}
Let the notations be as in Setting (E) and let $U \subset B$ 
be a disjoint union of closed discs in $B$ 
such that any two distinct points in $T$ are contained in different discs.   
 Every section $i$ of the exact sequence 
\eqref{e:abelian-homotopy-exact-sequence-1} 
is induced by at most $t_X=\#  X_K(K)_{tors} $ rational points $P \in X_K(K)$ 
as in the definition \eqref{e:definition-of-i-p-main-reduction-step}. 
\end{proposition}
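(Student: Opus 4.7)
The plan is to prove that if $P, Q \in X_K(K)$ both induce the same section $i$, then $R := P - Q \in X_K(K)_{tors}$; combined with the fact that the set of rational points inducing $i$ is then a torsor under $X_K(K)_{tors}$, this gives the stated bound $t_X$.

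First I would unravel \eqref{e:definition-of-i-p-main-reduction-step} and show that $i_P = i_Q$ is equivalent to the identity $\ell^{-1} \sigma_P(\gamma) \ell = \sigma_Q(\gamma)$ in $\pi_1(X_{B_0}, \sigma_Q(b_0))$ for every $\gamma \in G$, where $\ell = l_{w_0, \sigma_P(b_0)} \circ l^{-1}_{w_0, \sigma_Q(b_0)}$ is a path in the fiber $X_{b_0}$. Since $X_{B_0}$ is a $K(\pi,1)$-space (both the base $B_0$, which retracts onto a bouquet of circles, and the elliptic fiber $X_{b_0}$ are aspherical), this is precisely the condition that $\sigma_P$ and $\sigma_Q$ are freely homotopic as continuous maps $B_0 \to X_{B_0}$. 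Using the fiberwise group structure on the smooth elliptic fibration $X_{B_0} \to B_0$, subtracting $\sigma_Q$ pointwise converts any such homotopy into a free homotopy $\sigma_R \simeq \sigma_O$, reducing the problem to showing: if $\sigma_R$ is freely homotopic to the zero section, then $R$ is torsion.

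Next, I would exploit the multiplication-by-$n$ isogeny $[n] \colon X_{B_0} \to X_{B_0}$, a finite unramified covering that fixes $\sigma_O$. Lifting the homotopy $\sigma_R \simeq \sigma_O$ through $[n]$, starting from the trivial lift of $\sigma_O$, produces a continuous lift $\tilde\sigma$ satisfying $[n]\tilde\sigma = \sigma_R$ and $\tilde\sigma \simeq \sigma_O$; the identity $f \circ [n] = f$ forces $\tilde\sigma$ to be a section of $X_{B_0} \to B_0$. By rigidity of finite \'etale covers, $\tilde\sigma$ is algebraic and thus corresponds to some $R_1 \in X_K(K)$ with $nR_1 = R$ and $\sigma_{R_1} \simeq \sigma_O$. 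The same argument then applies to $R_1$, so $R$ lies in $n^k X_K(K)$ for every $k \geq 1$.

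To conclude, I would invoke the Lang--N\'eron theorem: the nonisotriviality hypothesis implies that the $K/\C$-trace of $X_K$ is zero, hence $X_K(K)$ is a finitely generated abelian group, and therefore $\bigcap_{k \geq 1} n^k X_K(K) \subset X_K(K)_{tors}$. This forces $R$ to be torsion, completing the proof. The most delicate point is the third step: one must arrange the lift to be homotopic to $\sigma_O$ (which follows by lifting the homotopy itself, not just the endpoint $\sigma_R$) and verify that the analytic section obtained over $B_0$ determines a genuine element of $X_K(K)$, which uses that a $K$-point of the generic fiber is already determined by a section over any dense open of $B$.
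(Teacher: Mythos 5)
Your plan is the standard Parshin-type argument and is essentially correct; the paper does not reproduce a proof here but cites \cite[Proposition 9.2]{phung-19-abelian} and \cite[Proposition 2.1]{parshin-90}, and the latter is exactly the topological $[n]$-lifting route you take. It is worth noting that in the paper's own Appendix (Theorem \ref{t:homotopy-northcott-unit-equation}) the analogous statement for $\G_m$ is proved via a Kummer-theoretic diagram, namely the injection $\OO_S^*/(\OO_S^*)^n \hookrightarrow H^1(\widehat G,\mu_n)$ together with the torsion-freeness of $\OO_S^*/\C^*$; this is the cohomological packaging of the same divisibility argument, and one would expect the cited \cite[Proposition 9.2]{phung-19-abelian} to look similar. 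So you and the paper do the same thing in different languages.

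Three small points to tighten. First, $i_P=i_Q$ gives only the \emph{forward} implication to free homotopy of $\sigma_P$ and $\sigma_Q$ (free homotopy in a $K(\pi,1)$ is equivalent to agreement of $\pi_1$-maps \emph{up to conjugation by some path}, and the converse would only give $i_P$ conjugate to $i_Q$), but this direction is all you use, so the proof is fine; just avoid the word ``equivalent.'' Second, the ``subtraction'' step should be phrased as postcomposing the homotopy $H$ with the fibrewise translation homeomorphism $x\mapsto x-\sigma_Q(f(x))$ of $X_{B_0}$ (which is defined on all of $X_{B_0}$), since the intermediate stages of a free homotopy need not be sections. Third, and most importantly, the justification you give for algebraicity of the lift (``a $K$-point is determined by a section over any dense open'') does not actually address the issue, since $B_0$ is an analytic rather than Zariski open. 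The correct mechanism is that the hypothesis on $U$ (each disc meets at most one point of $T$, and $T\subset U$) forces the restriction map $\pi_1(B_0)\to\pi_1(B\setminus T)$ to be surjective; hence the connected components of the algebraic finite \'etale cover $\sigma_R^*[n]\to B\setminus T$ are in bijection with those of its restriction to $B_0$, so a topological section over $B_0$ produces a degree-one component over $B\setminus T$, which is automatically an algebraic section and gives $R_1\in X_K(K)$ with $nR_1=R$ and $\sigma_{R_1}|_{B_0}=\tilde\sigma$. Finally, you write that the fibre of $P\mapsto i_P$ is a ``torsor under $X_K(K)_{tors}$''; what your argument actually shows is that it is \emph{contained in} a single coset of $X_K(K)_{tors}$, which is exactly the bound $t_X$ needed, so the conclusion stands.
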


\begin{proof}  
cf. \cite[Proposition 9.2]{phung-19-abelian}, see also \cite[Proposition 2.1]{parshin-90}. 
More details and description can be found in \cite{phung-19-phd}.  
\end{proof}

\subsection{Hyperbolic height on Riemann surfaces} 

One of the key ingredients in the proof of 
Theorem \ref{t:parshin-generic-emptiness-higher-dimension} 
in \cite{phung-19-abelian}  
is the following linear bound 
on the hyperbolic length of loops in 
various complements of a Riemann surface. 
Let $U$ be any finite union of disjoint closed discs in the Riemann surface 
$B$ and denote $B_0 \coloneqq B \setminus U$. 
It is shown in \cite{phung-19-abelian} that: 
 
\begin{theorem}
\label{t:linear-bound-s-base-curve-1}
For every free homotopy class $\alpha \in \pi_1(B_0)$, 
there exists $L >0$ with the following property. 
For any finite subset $S \subset B_0$,  
there exists  a piecewise smooth loop  
$\gamma \subset B_0 \setminus S$ which represents     
the free homotopy class $\alpha$ in $B_0$ and satisfies:  
\begin{equation}
\label{e:main-hyper-length-bound-intro} 
\length_{d_{B_0 \setminus S}} (\gamma) \leq L(\#S + 1). 
\end{equation}
\end{theorem}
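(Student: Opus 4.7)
The plan is to fix a reference piecewise smooth representative $\gamma_0 \subset B_0$ of the free homotopy class $\alpha$, independently of $S$, and then, given $S$, to modify $\gamma_0$ by one local detour around each point of $S$ that lies close to $\gamma_0$. If each detour can be made to contribute a uniformly bounded amount to the Kobayashi length measured in $B_0 \setminus S$, the total length will be bounded by $L_0 + O(\#S)$, giving the desired linear bound.

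First I would use the fact that $B_0$ is an orientable surface with non-empty boundary, hence homotopy equivalent to a finite graph, to pick a piecewise smooth embedded representative $\gamma_0 \subset B_0$ of $\alpha$ together with a fixed tubular neighborhood $N \subset B_0$. Set $L_0 := \length_{d_{B_0}}(\gamma_0)$; both $\gamma_0$ and $N$ depend only on $\alpha$. For each finite $S \subset B_0$, after a slight perturbation I may assume $S \cap \gamma_0 = \emptyset$. Then for every $s \in S$, choose a topological disk $D_s \subset B$ centered at $s$ with $D_s \subset B_0$, $D_s \cap S = \{s\}$ and the $D_s$ pairwise disjoint; when $S$ is dense the radii of the $D_s$ may shrink. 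Construct $\gamma_S$ from $\gamma_0$ by replacing, for each $s$ with $\gamma_0 \cap D_s \neq \emptyset$, every component of $\gamma_0 \cap D_s$ by an arc on a concentric middle circle $C_s \subset D_s$ matching endpoints on $\partial D_s$. Since each $D_s$ is simply connected in $B_0$, $\gamma_S$ remains freely homotopic to $\gamma_0$ in $B_0$, while by construction $\gamma_S \subset B_0 \setminus S$.

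The length estimate splits into two contributions. For the detour arcs on $C_s$, the inclusion $D_s \setminus \{s\} \subset B_0 \setminus S$ and monotonicity of the Kobayashi pseudometric give $d_{B_0 \setminus S} \leq d_{D_s \setminus \{s\}}$ on $D_s \setminus \{s\}$. As the Kobayashi/hyperbolic metric on a punctured disk is scale-invariant, the length of $C_s$ in $d_{D_s \setminus \{s\}}$ is a universal constant (explicitly $2\pi/\log 2$ for the middle circle), so the total detour contribution is $O(\#S)$. For the remaining arcs of $\gamma_0 \cap (B_0 \setminus \bigcup_s D_s)$, I would cover them by thin holomorphic strips $T_j \subset B_0 \setminus S$ fitted between consecutive disks $D_s$, and bound $d_{B_0 \setminus S}$ on each $T_j$ from above by the Kobayashi metric of $T_j$ itself. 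Since there are at most $O(\#S) + O(1)$ such strips, each of bounded Kobayashi length, this contribution is again $O(\#S+1)$; summing gives $\length_{d_{B_0 \setminus S}}(\gamma_S) \leq L(\#S+1)$.

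The main obstacle is the second half of the length estimate: as $S$ becomes dense, the metric $d_{B_0 \setminus S}$ can blow up even on regions disjoint from $S$, so the radii of the disks $D_s$ and the widths of the tubes $T_j$ must be carefully calibrated to the local spacing of $S$. The point is to make each $D_s$ large enough (relative to $\mathrm{dist}(s, S \setminus \{s\})$ and to $\mathrm{dist}(s, \gamma_0)$) so that the residual main-arc segments admit Kobayashi-controlled tubes in $B_0 \setminus S$, while still keeping the $D_s$ pairwise disjoint. This bookkeeping of the local scales, and in particular the fact that the $\#S$ detours absorb all the potential blow-up of $d_{B_0 \setminus S}$, is the technical heart of the argument; no height theory is used in this construction.
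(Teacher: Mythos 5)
The paper itself does not prove Theorem \ref{t:linear-bound-s-base-curve-1}: it cites it verbatim from \cite{phung-19-abelian}. So there is no in-paper argument to compare against, and your proposal has to be judged on its own. As it stands it is an outline with a real gap at exactly the step you yourself flag as ``the technical heart.''

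The detour part is essentially right in spirit but slightly misstated. You put the detour on a middle circle $C_s$ strictly inside $D_s$ and quote the bound $2\pi/\log 2$ for $\length_{d_{D_s\setminus\{s\}}}(C_s)$; that number is correct, but your replacement arc has endpoints on $\partial D_s$, and the radial connectors from $\partial D_s$ down to $C_s$ have \emph{infinite} length in $d_{D_s\setminus\{s\}}$, since the Kobayashi metric of a punctured disk blows up at the outer boundary. The monotonicity bound $d_{B_0\setminus S}\le d_{D_s\setminus\{s\}}$ therefore gives no information on those connectors. This is fixable (route the detour along $\partial D_s$ itself and invoke a strictly larger disk $D_s'\supset D_s$ with $D_s'\cap S=\{s\}$, so that $\partial D_s$ is an interior circle of the larger punctured disk), but the fix already forces a coupling $r(D_s')\lesssim \dist(s,S\setminus\{s\})$ and $r(D_s')\lesssim \dist(s,\partial B_0)$, which feeds directly into the second problem.

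The genuine gap is the assertion that the strips $T_j$ ``each have bounded Kobayashi length.'' The Kobayashi length of an arc of Euclidean length $\ell_j$ traversing a tube of width $w_j$ is of order $\ell_j/w_j$, and there is no a priori reason this ratio is $O(1)$ per strip. The width $w_j$ is forced to be small wherever points of $S$ approach $\gamma_0$ without $\gamma_0$ entering the corresponding $D_s$, and under your calibration $r(D_s)\le\dist(s,S\setminus\{s\})/2$ this happens precisely for clusters of $S$ near $\gamma_0$: the disjointness constraint caps $r(D_s)$ well below $\dist(s,\gamma_0)$, so no detour is triggered yet the strip must squeeze past the cluster. One can check by hand in several such configurations that the \emph{sum} $\sum_j \ell_j/w_j$ still comes out $O(\#S+1)$, essentially because (i) the Euclidean lengths $\ell_j$ of arcs passing a small clearance $w_j$ are themselves comparable to the cluster diameter, and (ii) there are at most $O(\#S)$ clusters; but this amortized bookkeeping is exactly what needs to be proved, and your claim of a per-strip $O(1)$ bound is the wrong invariant to aim for. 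Until you replace ``each of bounded Kobayashi length'' by a proved inequality of the form $\sum_j \ell_j/w_j \le C(\#S+1)$ — with an explicit choice of $r(D_s)$ and $w_j$ simultaneously compatible with disjointness of the $D_s$, inclusion $T_j\subset B_0\setminus S$, and $T_j\subset B_0$ (so the fixed boundary $\partial B_0$ also enters the width) — the argument is incomplete.
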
 

Recall that $d_{B_0 \setminus S}$ denotes the intrinsic Kobayashi hyperbolic metric on $B_0 \setminus S$ 
(cf. Definition \ref{pseudo Kobayashi hyperbolic metric})

\subsection{Metric properties of hyperbolic manifolds} 

In this section, we collect some fundamental properties of the 
pseudo Kobayashi hyperbolic metric  
and of hyperbolic manifolds due to Green. 
Let $X$ be a complex manifold. 
The \emph{pseudo Kobayashi hyperbolic metric} $d_X \colon X \times X \to X$ 
is defined as follows. 
Let $\rho$ be the Poincar\' e metric on the unit disc $\Delta= \{z \in \C \colon |z|=1\}$.  
\par
Let $x, y \in X$. Consider the data $L$ consisting of 
a finite sequence of points $x_0=x, x_1, \dots, x_n=y$ in $X$,  
a sequence of holomorphic maps 
$f_i \colon \Delta \to X$ and of pairs $(a_i, b_i) \in \Delta^2$ for 
$i=0, \dots, n$ such that $f_i(a_i)=x_i$ and $f(b_i)=x_{i+1}$. 
Let $H(x,y; L)= \sum_{i=0}^n \rho(a_i, b_i)$. 

\begin{definition}
[cf. \cite{kobay-hyperbolic-definition}]
\label{pseudo Kobayashi hyperbolic metric} 
For $x, y \in X$, we define $d_X(x,y) \coloneqq \inf_{L}   H(x,y;L)$. 
\end{definition}

 $X$ is called a \emph{hyperbolic} manifold if 
 $d_X(x,y)>0$ for all distinct points $x, y \in X$, i.e., 
when $d_X$ is a metric.  
The fundamental distance-decreasing property 
(cf. \cite[Proposition 3.1.6]{kobay-hyperbolic}) can be stated as follows:  

\begin{lemma}
\label{l:distance-decreasing-hyperbolic}
Let $f \colon X \to Y$ be a holomorphic map of complex manifolds. 
Then for all $x, y \in X$, $d_Y(f(x), f(y)) \leq d_X(x,y)$. 
In particular, if $X \subset Y$,     $d_Y\vert_X \leq d_X$. 
\end{lemma}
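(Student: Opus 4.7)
The plan is direct: push any admissible chain of Poincar\'e discs computing $d_X(x,y)$ forward along $f$ to obtain an admissible chain for $(f(x), f(y))$ in $Y$ with the same total Poincar\'e length, and then take infima. The assertion is essentially tautological given the way $d_X$ is constructed, and no analysis beyond the definition is required.

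More precisely, I would fix $x, y \in X$ and let $L$ be admissible data for the pair $(x,y)$ in the sense of the definition preceding the lemma: a finite sequence $x_0 = x, x_1, \dots, x_n = y$ in $X$, holomorphic maps $f_i \colon \Delta \to X$, and pairs $(a_i, b_i) \in \Delta^2$ with $f_i(a_i) = x_i$ and $f_i(b_i) = x_{i+1}$, so that $H(x,y;L) = \sum_{i=0}^n \rho(a_i, b_i)$. I then define \emph{pushed-forward} data $f_*L$ for the pair $(f(x), f(y))$ in $Y$ by replacing each base point $x_i$ with $f(x_i)$ and each disc $f_i \colon \Delta \to X$ with the composition $f \circ f_i \colon \Delta \to Y$, while keeping the parameters $(a_i, b_i)$ unchanged. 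Since composition of holomorphic maps between complex manifolds is holomorphic, $f_*L$ is admissible data for $(f(x), f(y))$, and by construction one has
\[
H\bigl(f(x), f(y); f_*L\bigr) \;=\; \sum_{i=0}^n \rho(a_i, b_i) \;=\; H(x,y;L).
\]
Taking the infimum over all admissible $L$ for $(x,y)$ then yields $d_Y(f(x), f(y)) \leq d_X(x,y)$, which is the first assertion.

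For the second statement, when $X$ is a complex submanifold of $Y$ the inclusion $\iota \colon X \hookrightarrow Y$ is holomorphic, so applying the first part to $\iota$ gives $d_Y(x,y) = d_Y(\iota(x), \iota(y)) \leq d_X(x,y)$ for every pair $x, y \in X$, i.e.\ $d_Y|_X \leq d_X$.

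There is really no substantive obstacle: the content of the lemma is precisely the defining distance-decreasing property of the Kobayashi pseudo-metric, and the proof reduces to the two formal observations that compositions of holomorphic maps are holomorphic and that the parameter pairs $(a_i, b_i) \in \Delta^2$ (together with their $\rho$-distances) are carried through the push-forward untouched. The only mild piece of bookkeeping is verifying that $f_*L$ is genuinely a chain from $f(x)$ to $f(y)$, which follows at once from $(f \circ f_i)(a_i) = f(x_i)$ and $(f \circ f_i)(b_i) = f(x_{i+1})$.
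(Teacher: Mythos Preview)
Your proof is correct and is the standard argument directly from the definition of the Kobayashi pseudo-metric. The paper does not actually prove this lemma; it merely cites \cite[Proposition 3.1.6]{kobay-hyperbolic}, so your argument supplies exactly the (elementary) details that the reference would contain.
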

 
A complex space $X$ is said to be \emph{Brody 
hyperbolic} if it does not contains entire curves, i.e., 
there is no nonconstant holomorphic maps 
$\C \to X$.

\begin{theorem} [Green]
\label{t:green-hyperbolic-embedding} 
Let $X$ be a relatively compact open subset of a complex manifold $M$.  
Let $D \subset X$ be a closed complex subspace. 
Denote by $\bar{X}, \bar{D}$ the closures of $X$ and $D$ in $M$. 
Assume that $\bar{D}$ and $\bar{X} \setminus \bar{D}$ are Brody hyperbolic. 
\par
Then $X \setminus D$ is hyperbolic and we have     
$d_{X \setminus D} \geq \rho\vert_{X \setminus D}$ 
for some Hermitian metric $\rho$ on $M$. 
In particular, if $M$ is compact and $\lambda$ is any Riemannian metric on $|M|$ then 
there exists $c >0$ such that 
$d_{X \setminus D} \geq c \lambda \vert_{X \setminus D}$. 
\end{theorem}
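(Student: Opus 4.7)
The plan is to establish Green's theorem by contradiction, with Brody's reparametrization lemma as the central tool. Fix a Hermitian metric $\rho$ on $M$ and assume for contradiction that no constant multiple of $\rho|_{X \setminus D}$ bounds $d_{X \setminus D}$ from below. By the standard characterization of failure of hyperbolic embedding, there exist sequences $p_n, q_n \in X \setminus D$ with $p_n \to p$ and $q_n \to q$ in $\bar{X}$ (using relative compactness of $X$ in $M$), $p \neq q$, and $d_{X \setminus D}(p_n, q_n) \to 0$. Unpacking Definition \ref{pseudo Kobayashi hyperbolic metric}, for each $n$ there is a chain of holomorphic discs $f_{n,i} \colon \Delta \to X \setminus D$ connecting $p_n$ to $q_n$ with total Poincar\'e length tending to $0$. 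I then invoke Brody's reparametrization lemma: after rescaling by suitable dilations of $\Delta$ and passing to a convergent subsequence in the compact-open topology, one extracts a nonconstant holomorphic map $g \colon \C \to M$ with bounded $\rho$-derivative, whose image lies in the closure of $\bigcup_{n,i} f_{n,i}(\Delta)$ in $M$, hence in $\overline{X \setminus D} \subset \bar{X}$.

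Next I locate $g(\C)$ via a dichotomy. If $g(\C) \subset \bar{D}$, then $g$ is a nonconstant entire curve in $\bar{D}$, contradicting Brody hyperbolicity of $\bar{D}$. Otherwise there exists $z_0 \in \C$ with $g(z_0) \notin \bar{D}$. Since $D$ is a closed complex subspace of the open set $X \subset M$, the preimage $g^{-1}(D)$ is an analytic subset of the open set $U \coloneqq g^{-1}(X) \subset \C$; by the identity theorem, it is either all of $U$ (impossible since $g(z_0) \notin D$) or discrete in $U$. Combining this with a careful use of Brody's lemma---perturbing the rescaling points so that the limit curve does not asymptote to $\bar{D}$ from $X \setminus D$---one deduces that $g(\C) \subset \bar{X} \setminus \bar{D}$, contradicting Brody hyperbolicity of $\bar{X} \setminus \bar{D}$. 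Both horns of the dichotomy yield contradictions, so $d_{X \setminus D} \geq \rho|_{X \setminus D}$.

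For the ``in particular'' statement, when $M$ is compact any two continuous Riemannian metrics on $|M|$ are bilipschitz comparable, so for any Riemannian $\lambda$ there is $c > 0$ with $\rho \geq c \lambda$ on $M$; combining with the previous step gives $d_{X \setminus D} \geq c \lambda|_{X \setminus D}$, and positive-definiteness of $\lambda$ on $X \setminus D$ forces $d_{X \setminus D}$ to be a genuine distance, i.e., $X \setminus D$ is hyperbolic. The hardest part of the proof is the dichotomy in the second paragraph: ruling out the case where the Brody limit curve $g$ merely touches $\bar{D}$ without being entirely contained in it. Addressing this requires both the analyticity of $D$ inside $X$ (so $g^{-1}(D)$ inside $g^{-1}(X)$ is either discrete or the whole thing) and a delicate refinement of the Brody extraction to ensure the limit curve does not sit on $\partial_M \bar{D}$, where the analytic description of $D$ fails.
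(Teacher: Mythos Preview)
The paper does not prove this statement; it simply cites \cite[Theorem~3]{gre-78}. Your sketch follows the Brody-reparametrization strategy, which is exactly Green's own method, so at the level of overall approach there is nothing to contrast.

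The gap is precisely where you flag it, but your proposed fix points in the wrong direction. One does not resolve the dichotomy by ``perturbing the rescaling points'' or by any ``refinement of the Brody extraction''; the limit curve $g \colon \C \to \bar X$ is what it is, and the dichotomy must be established for $g$ after the fact. The missing ingredient is Hurwitz's theorem. It is also cleaner to run the contradiction at the infinitesimal level (Remark~\ref{r:infinitesimal-hyperbolic}) rather than through chains of discs: failure of the inequality produces holomorphic maps $f_n \colon \Delta_{R_n} \to X \setminus D$ with $R_n \to \infty$ and $|df_n(0)|_\rho$ bounded away from zero, and after Brody reparametrization a subsequence converges locally uniformly to a nonconstant entire map $g \colon \C \to \bar X$. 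On the open set $g^{-1}(X)$ each $f_n$ avoids the analytic subspace $D$, which is locally the zero locus of a holomorphic function; Hurwitz's theorem then forces $g$, on every connected component of $g^{-1}(X)$, either to land in $D$ or to miss $D$ entirely. This, combined with the connectedness of $\C$ and the fact that $g^{-1}(\bar D)$ is closed, is what yields the dichotomy $g(\C)\subset\bar D$ or $g(\C)\subset\bar X\setminus\bar D$, contradicting one of the two Brody-hyperbolicity hypotheses. The boundary issue you mention (points of $\bar D\cap(\bar X\setminus X)$, where $D$ is no longer cut out analytically) is a genuine subtlety and in Green's original the hypotheses are arranged so that it does not arise; but in any case it is not addressed by modifying the Brody limit. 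Finally, a minor point: hyperbolicity of $X\setminus D$ already follows from $d_{X\setminus D}\geq\rho\vert_{X\setminus D}$, since $\rho$ is a genuine metric on $M$; you need not wait for the compact ``in particular'' clause.
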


\begin{proof}
See \cite[Theorem 3]{gre-78}. 
\end{proof}

\begin{remark} 
\label{r:infinitesimal-hyperbolic}
Let $X$ be a smooth complex manifold. 
Let $ \Delta(0,R) = \{z \in \C \colon |z| < R\}$ for every $R >0$. 
The infinitesimal Kobayashi-Royden pseudo metric $\lambda_X$ on $X$ 
corresponding to the  
Kobayashi pseudo hyperbolic metric $d_X$  
can be defined as follows. 
For $x \in X$ and every vector $v \in T_xX$,  
$\lambda_X(x, v) \coloneqq \inf 2/R$, 
where the minimum is taken over all $R >0$ for which there exists a holomorphic map 
$f \colon \Delta(0,R) \to X$ such that $f'(0)=v$. 
\end{remark}
 
Thanks to the distance-decreasing property of the pseudo-Kobayashi hyperbolic metric, 
we have the following important property of sections. 

\begin{lemma}
\label{l:section-geodesic}
Let $f \colon X \to Y$ be a holomorphic map between complex spaces. 
Suppose that $\sigma \colon Y \to X$ is a holomorphic section. 
Then $\sigma(Y)$ is a totally geodesic subspace of $X$, i.e., for all 
$x, y \in Y$, we have 
$d_Y(x, y)= d_{X} (\sigma(x), \sigma(y))$. 
\end{lemma}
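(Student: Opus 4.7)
The plan is to apply the distance-decreasing property (Lemma \ref{l:distance-decreasing-hyperbolic}) twice, exploiting the fact that $\sigma$ is a one-sided inverse of $f$. The equality $d_Y(x,y) = d_X(\sigma(x), \sigma(y))$ should drop out immediately from a two-line sandwich argument; there is no real obstacle.

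Concretely, I would first use that $\sigma \colon Y \to X$ is holomorphic. By Lemma \ref{l:distance-decreasing-hyperbolic} applied to $\sigma$, for every $x, y \in Y$ we obtain
\[
d_X(\sigma(x), \sigma(y)) \leq d_Y(x, y).
\]
Second, since $\sigma$ is a section of $f$, we have $f \circ \sigma = \mathrm{id}_Y$, and in particular $f(\sigma(x)) = x$ and $f(\sigma(y)) = y$. Applying Lemma \ref{l:distance-decreasing-hyperbolic} to the holomorphic map $f \colon X \to Y$ at the points $\sigma(x), \sigma(y) \in X$ yields
\[
d_Y(x,y) = d_Y\bigl(f(\sigma(x)), f(\sigma(y))\bigr) \leq d_X(\sigma(x), \sigma(y)).
\]
Combining the two inequalities gives the desired equality $d_Y(x,y) = d_X(\sigma(x), \sigma(y))$, which is precisely the statement that $\sigma(Y)$ is totally geodesic in $X$ for the Kobayashi pseudo-metric.

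The argument is essentially formal and uses only the functoriality of $d_{(-)}$ under holomorphic maps together with $f \circ \sigma = \mathrm{id}_Y$; in particular, no hyperbolicity or properness hypothesis on $X$ or $Y$ is needed, and the statement holds in the generality of complex spaces as claimed. The only point worth a brief mention is that the same argument also shows the corresponding infinitesimal statement $\lambda_Y(y, v) = \lambda_X(\sigma(y), d\sigma_y(v))$ for the Kobayashi--Royden pseudo-metric of Remark \ref{r:infinitesimal-hyperbolic}, should it be needed later.
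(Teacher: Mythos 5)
Your proof is correct and is essentially the same sandwich argument the paper uses: apply the distance-decreasing property once to $\sigma$ and once to $f$, then combine. The paper compresses it into a single chain of inequalities, but the content is identical.
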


\begin{proof}
By Lemma \ref{l:distance-decreasing-hyperbolic}, 
$d_Y (x, y) = d_Y (f(\sigma(x)), f(\sigma(y))) \leq d_{X} (\sigma(x), \sigma(y)) \leq d_Y (x, y)$. 
\end{proof}

\section{A key technical lemma}
\label{s:key-lemma} 
For the proof of Theorem \ref{t:strong-uniform-elliptic},  
we fix a Hermitian metric $\rho$ on the smooth surface $X$. 
It is clear that we can assume $\tilde{Z}$ integral. 
Define $B_0 \coloneqq B \setminus V$ and fix a system generators 
$\alpha_1, \dots, \alpha_k$ consisting of free homotopy classes of the fundamental groupoid $\pi_1(B_0)$. 
For a complex space $Y$, the symbol 
 $d_Y$ always denotes the Kobayashi hyperbolic pseudo-metric
on $Y$ (cf. Definition \ref{pseudo Kobayashi hyperbolic metric}).
\par
The proof of Theorem \ref{t:strong-uniform-elliptic} is based on the following key technical lemma 
which is of local nature. 
The main idea is the following. 
For each $z \in \tilde{Z}$, the hyperbolic metric on 
$(X \setminus \DD_z)\vert_{B_0}$ 
dominates the Hermitian metric $\rho$ by 
Green's theorem \ref{t:green-hyperbolic-embedding} 
up to a certain strictly positive factor. 
When $z$ varies in $\tilde{Z}$, these factors vary as well and may 
\emph{a priori} tend to $0$. 
The point of Lemma \ref{l:key-lemma-strong-uniform-elliptic} below is that 
for $z$ in a small neighborhood of $\tilde{Z}$, we can, up to restricting   
further $(X \setminus \DD_z)\vert_{B_0}$ over some fixed 
nice complement of $B_0$ (cf. Property (b) below), these factors are in fact bounded below by a strictly 
positive constant (cf. Property (P) below).   
 
 \begin{lemma} 
\label{l:key-lemma-strong-uniform-elliptic} 
Let the notations be as in Theorem \ref{t:strong-uniform-elliptic}. 
Let $\varepsilon > 0$. 
Then there exists $ M > 0$ such that for each $z_i \in \tilde{Z}$, 
we have the following data:
\begin{enumerate} [\rm (a)]
\item
an analytic open neighborhood $U_i$ of $z_i$ in $\tilde{Z}$;
\item
a disjoint union $V_i \subset B_0$ consisting of $\leq M$ closed discs each of radius $\leq \varepsilon$; 
\item
a constant $c_i > 0$;
\end{enumerate}
with the following property: 
\begin{enumerate} [\rm (P)]
\item 
for each $z \in U_i$, we have 
$
d_{(X \setminus \DD_z) \vert_{(B_0 \setminus V_i)}} \geq c_i \rho\vert_{(X \setminus \DD_z)\vert_{(B_0 \setminus V_i)}}. 
$
 \end{enumerate}

\end{lemma}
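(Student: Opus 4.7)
The plan is to reduce the lemma, for each fixed $z_i \in \tilde{Z}$, to an application of Green's theorem (Theorem~\ref{t:green-hyperbolic-embedding}) for the single divisor $\DD_{z_i}$, and then to propagate the resulting metric inequality to all $z$ in a small analytic neighborhood of $z_i$ via a Brody reparametrization argument combined with Hurwitz's theorem applied to a local defining equation of the Cartier family $\DD$.

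For the construction of $V_i$, I exploit that $\tilde{Z}$ may be taken integral and that $\DD \to B \times \tilde{Z}$ is flat, so the restriction $\DD_z \to B$ is a finite surjective morphism of some degree $N$ constant in $z$, and the class of $\DD_z$ in $\NS(X)$ is locally constant on $\tilde{Z}$. A Riemann--Hurwitz/intersection-theoretic computation then bounds the cardinality of the ramification locus $R_z \subset B$ by a constant $M$ depending only on the family. Let $V_i$ be a disjoint union of closed discs in $B_0$ of radii $\leq \varepsilon$, one around each point of $R_{z_i} \cap B_0$; then $\#V_i \leq M$. Set $X' \coloneqq f^{-1}(B_0 \setminus V_i)$, which is open and relatively compact in $X$ since $B$ is compact and $f$ proper. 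The hypotheses of Green's theorem for the pair $(\overline{X'}, \DD_{z_i} \cap \overline{X'})$ then hold: since $T \subset V$ is non-empty by non-isotriviality, $\overline{B_0 \setminus V_i}$ is a proper compact subset of $B$ and therefore Brody hyperbolic; any entire curve into $\overline{X'}$ or into $\DD_{z_i} \cap \overline{X'}$ thus projects to a constant and lies in a single smooth fibre $X_b$ with $b \notin T$, while by horizontality and flatness $\DD_{z_i} \cap X_b$ is a non-empty finite set, so $X_b \setminus \DD_{z_i}$ is an elliptic curve minus points and hence Brody hyperbolic. Green's theorem consequently yields some $c > 0$ with $d_{X' \setminus \DD_{z_i}} \geq c\, \rho$.

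The main obstacle, and the core of the proof, is to upgrade this to hold uniformly for all $z$ in a suitable analytic neighborhood $U_i$ of $z_i$. Assume for contradiction that no such pair $(U_i, c_i)$ exists. Passing to the infinitesimal Kobayashi--Royden pseudometric (Remark~\ref{r:infinitesimal-hyperbolic}), extract sequences $z_n \to z_i$, points $x_n \in X' \setminus \DD_{z_n}$, unit $\rho$-vectors $v_n \in T_{x_n} X$, and holomorphic discs $f_n \colon \Delta(0, R_n) \to X' \setminus \DD_{z_n}$ with $f_n(0) = x_n$, $f_n'(0) = v_n$, and $R_n \to \infty$. Brody's reparametrization lemma, applied inside the compact ambient space $\overline{X'}$, produces affine reparametrizations $\phi_n$ and, after passing to a subsequence, a non-constant entire curve $g_\infty \colon \C \to \overline{X'}$ as the locally uniform limit of $f_n \circ \phi_n$. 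To derive a contradiction it suffices to show $g_\infty(\C) \cap \DD_{z_i} = \emptyset$: if $g_\infty(\zeta_0) \in \DD_{z_i}$, pick a local defining function $\Phi(x, z)$ of the Cartier family $\DD$ holomorphic in both $x$ and $z$ near $(g_\infty(\zeta_0), z_i)$, which is available by flatness; the functions $\Phi(f_n(\phi_n(\zeta)), z_n)$ are holomorphic and nowhere vanishing on a disc about $\zeta_0$ and converge uniformly to $\Phi(g_\infty(\zeta), z_i)$, so Hurwitz's theorem forces the limit to be either identically zero---whence $g_\infty(\C) \subset \DD_{z_i} \cap \overline{X'}$, contradicting the Brody hyperbolicity of this set established above---or nowhere zero, contradicting $g_\infty(\zeta_0) \in \DD_{z_i}$. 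Hence $g_\infty$ maps $\C$ into $\overline{X'} \setminus \DD_{z_i}$, whose Brody hyperbolicity forces $g_\infty$ constant, the desired contradiction. The delicate ingredient is precisely this Hurwitz-type limiting argument, which is what requires the flatness hypothesis on $\DD$ in order to produce holomorphically varying local equations for the divisors $\DD_z$.
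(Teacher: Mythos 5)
Your proof is correct, and for the crucial step it takes a genuinely different route from the paper's. Both arguments share the same skeleton: the Riemann--Hurwitz bound controlling $\#V_i$, the contradiction setup producing a sequence $z_n \to z_i$ with Brody discs $h_n \colon \Delta_{R_n} \to (X\setminus\DD_{z_n})\vert_{B_0\setminus V_i}$, reparametrization, and Arzel\`a--Ascoli extraction of a non-constant entire limit $g_\infty$ landing in a single fibre over some $b_* \in B_0 \setminus V_i$. The paper then invokes Lemma~\ref{l:analytic-cover-deformation} to make $\DD_{\mathrm{red}}$ \'etale over a bidisc $\Delta \times U_i$ around $(b_*, z_i)$ (this is precisely why $V_i$ must absorb the ramification and singular locus of $\DD_{z_i}$), extracts a holomorphic local section $s_z$ of $\DD$, and uses the elliptic group law to build fibrewise translations $\Psi_z$ that carry the Brody discs into the \emph{fixed} hyperbolically embedded space $X_\Delta \setminus \DD_{z_i}$, whence a contradiction with $R_n \to \infty$. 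You instead show directly that $g_\infty(\C)\cap\DD_{z_i} = \varnothing$ by applying Hurwitz's theorem to the nowhere-vanishing functions $\zeta \mapsto \Phi(f_n(\phi_n(\zeta)), z_n)$, where $\Phi$ is a local defining equation of the relative Cartier divisor $\DD$, and then conclude by Brody hyperbolicity of $\overline{X'} \setminus \DD_{z_i}$. Your route is more elementary and more portable: it makes no use of the group structure on the fibres, bypasses Lemma~\ref{l:analytic-cover-deformation} entirely, and (since it never needs the \'etale local structure of $\DD$ at $b_*$) would in fact go through without enlarging $V$ to $V \cup V_i$ at all; it also transfers verbatim to the $\G_m$-case of Lemma~\ref{l:key-lemma-strong-unit-geometric-general}, where the paper replaces translation by fibrewise multiplication. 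The one minor redundancy is that the explicit invocation of Green's theorem for the fixed $z_i$ in your second paragraph is not used downstream---your contradiction only needs the Brody hyperbolicity of $\overline{X'}\setminus\DD_{z_i}$, which you verify directly.
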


We remark first a standard lemma. 

\begin{lemma}
\label{l:analytic-cover-deformation} 
Let $f \colon X \to Y$ be a proper flat morphism of integral   
complex algebraic varieties 
of the same dimension. 
Assume that $Y$ is a smooth manifold  
 and  for some $y \in Y$, the fibre $X_y$ is reduced and finite.  
Then there exists an analytic open neighborhood $U \subset Y$ 
of $y$ such that $f^{-1}(U) \to U$ is a finite \' etale cover. 
\end{lemma}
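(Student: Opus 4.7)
The plan is to shrink $Y$ analytically around $y$ until $f$ becomes finite, and then to invoke the standard characterization that, in characteristic zero, a flat morphism is étale precisely where its scheme-theoretic fibre is reduced.

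First I would reduce to the finite case. Writing $X_y = \{x_1, \ldots, x_n\}$, I pick pairwise disjoint analytic open neighborhoods $W_i \ni x_i$ in $X$ and set
\[
U_0 \coloneqq Y \setminus f\bigl(X \setminus \bigsqcup_{i=1}^n W_i\bigr).
\]
Since $f$ is proper (hence closed), $U_0$ is an analytic open neighborhood of $y$, and over it the preimage $f^{-1}(U_0)$ lies inside $\bigsqcup_i W_i$. Combined with the hypothesis $\dim X = \dim Y$ and flatness (which forces fibres to have constant dimension equal to $0$ in a neighborhood of $y$), the map $f \colon f^{-1}(U_0) \to U_0$ is proper and quasi-finite, hence finite. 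Flat plus finite then yields that $f_* \OO_{f^{-1}(U_0)}$ is a locally free $\OO_{U_0}$-module of rank equal to $\length \OO_{X_y}$; since $X_y$ is reduced with $n$ distinct points, this rank is exactly $n$, so near each $x_i$ the scheme-theoretic fibre is just $\Spec \C$.

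Next I would apply the étale criterion: a flat morphism is étale at a point if and only if it is unramified there, if and only if the scheme-theoretic fibre at that point is reduced and the residue-field extension is separable. Both conditions hold at each $x_i$ (separability is automatic over $\C$, and the fibre $X_y$ is reduced by hypothesis), so $f$ is étale at each $x_i$. Openness of the étale locus in $f^{-1}(U_0)$ then provides an open $V \subset f^{-1}(U_0)$ containing $\{x_1, \ldots, x_n\}$ over which $f$ is étale, and shrinking the base once more to $U \coloneqq U_0 \setminus f(f^{-1}(U_0) \setminus V)$ (again open by properness) produces the desired finite étale cover $f^{-1}(U) \to U$.

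The only mild obstacle I anticipate is bookkeeping to ensure that the algebraic-geometric notions (finite, flat, étale, openness of the étale locus) transfer faithfully to the complex-analytic setting over $\C$; but this is classical, and no deeper geometric input is needed beyond properness, flatness and the reducedness of the distinguished fibre.
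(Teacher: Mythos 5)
Your proposal is correct and reaches the same conclusion, but packages the argument a bit differently from the paper. The paper shows that flatness plus a reduced finite fibre in characteristic zero makes $f$ \emph{smooth} at each point of $X_y$ (citing Liu), deduces that $X$ is regular at those points, uses the dimension equality to get that $f$ is \emph{submersive} there, and then invokes an analytic result of Gunning--Rossi to obtain an \'etale cover over the complement of the closed bad set $A = f(S) \cup f(S')$ --- whose closedness comes from properness, exactly the role properness plays in your two shrinking steps. You instead stay on the algebraic side: shrink first so that $f$ restricts to a finite morphism, then apply the ``flat and unramified $\Rightarrow$ \'etale'' criterion (reducedness of $X_y$ gives unramifiedness at each $x_i$, separability being automatic over $\C$), and finish by openness of the \'etale locus. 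Your local-freeness-of-$f_*\OO$ detour is not actually needed --- reducedness of $X_y$ already forces $\OO_{X_y,x_i} \cong \C$, hence unramifiedness, directly --- but it does no harm. Both proofs rest on the same pillars (characteristic-zero criteria at the fibre, openness of the good locus, properness to push down to the base); the paper reaches for analytic smoothness/submersiveness and Gunning--Rossi, while your route via the algebraic \'etale criterion is, if anything, a bit more self-contained, modulo the standard transfer of these notions to the analytic category which both arguments must ultimately invoke.
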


\begin{proof}
Since $X_y$ is finite and reduced and $f$ is flat, 
every point $x \in X_y$ is a smooth point of $f$ since we are in characteristic $0$ 
(cf. \cite[Lemma 3.20]{liu-alg-geom}). 
Since $Y$ is regular, 
we deduce from the open property of smooth morphisms (cf. \cite[Definition 6.14]{gorz})  
that every point of $X_y$ is a regular point of $X$ (cf. \cite[Theorem 4.3.36]{liu-alg-geom}). 
Since $\dim X= \dim Y$ and $Y$ is a smooth manifold, 
\cite[Definition 6.14]{gorz}  implies that the map $f$ is submersive at each point of the fibre $X_y$. 
Let $S \subset X$ be the set of singular points of $X$ (i.e., where $X$ is not locally a manifold) 
and let $S' \subset X$ be the set where     $f$ is not submersive. 
Since $f$ is proper,   $A= f(S) \cup f(S')$ is a closed subset of $Y$. 
Then by the proof of \cite[Theorem 21, Chapter III]{gunning-rossi}, 
$X \setminus f^{-1}(A) \to Y \setminus A$ is an \' etale cover. 
We have seen that $X_y \cap (S \cup S')= \varnothing$. 
Therefore, $y \notin A$ and  there exists 
an analytic open neighborhood $U \subset Y \setminus A$ 
of $y$ such that $f^{-1}(U) \to U$ is a finite \' etale cover. 
 \end{proof}

For ease of reading, 
we mention here the following key theorem 
of Brody: 

\begin{theorem} [Brody's reparametrization lemma]
\label{l:brody-reparametrization}
Let $M$ be a complex manifold with (possibly empty) boundary. 
Let $H$ be a Hermitian metric on $M$. 
Suppose that $f \colon \Delta_R \to M$ be a holomorphic 
map with $|df(0)|_H >c$ form some $c>0$. 
Then there exists a holomorphic map 
$g \colon \Delta_R \to M$ satisfying the following conditions: 
\begin{enumerate} [\rm (a)]
\item
$|dg(0)|_H=c$;
\item
$|dg(z)|_H \leq \frac{c R^2}{R^2-|z|^2}$ for all $z \in \Delta_R$;
\item
$\im (g) \subset \im (f)$. 
\end{enumerate}
\end{theorem}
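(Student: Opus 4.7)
The strategy is to produce $g$ as a reparametrization of the form $g(z)=f(\phi(\alpha z))$, with $\phi\in\Aut(\Delta_R)$ and $\alpha\in(0,1]$ chosen so that the normalized derivative
$F_h(z):=|dh(z)|_H\,(R^2-|z|^2)/R^2$ attains its global maximum at $z=0$ with value exactly $c$. Indeed, condition (b) is equivalent to $F_g\le c$ everywhere, (a) says $F_g(0)=c$, and together they force $0$ to be a global maximum of $F_g$. The key input is the Schwarz--Pick identity $|\phi'(z)|=(R^2-|\phi(z)|^2)/(R^2-|z|^2)$ for $\phi\in\Aut(\Delta_R)$, which yields, for $h=f\circ\phi$, the transformation rule $F_h(z)=F_f(\phi(z))$. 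In particular $\sup F_h=\sup F_f$, and precomposing with an automorphism sending $0$ to a near-maximizer of $F_f$ forces the maximum of $F_h$ to lie at the origin.

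To guarantee an actual maximizer, first restrict $f$ to a smaller disc $\Delta_{R'}$ with $R'<R$. Then $F_f^{R'}(z):=|df(z)|_H\,(R'^2-|z|^2)/R'^2$ is continuous on the compact $\overline{\Delta_{R'}}$, vanishes on the boundary, and equals $|df(0)|_H>c$ at the origin, so it attains its positive maximum $M_{R'}$ at some $a_{R'}\in\Delta_{R'}$. Choose $\phi_{R'}\in\Aut(\Delta_{R'})$ with $\phi_{R'}(0)=a_{R'}$ and set $h_{R'}:=f\circ\phi_{R'}$; by the transformation rule, $F_{h_{R'}}^{R'}$ achieves its maximum $M_{R'}$ at $0$, whence $|dh_{R'}(z)|_H\le M_{R'}R'^2/(R'^2-|z|^2)$. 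Since $M_{R'}\ge|df(0)|_H>c$, the constant $\alpha_{R'}:=c/M_{R'}\le 1$ is well defined; the map $g_{R'}(z):=h_{R'}(\alpha_{R'} z)$ is defined on $\Delta_{R'}$ and satisfies $|dg_{R'}(0)|_H=\alpha_{R'}M_{R'}=c$, $|dg_{R'}(z)|_H\le cR'^2/(R'^2-\alpha_{R'}^2|z|^2)\le cR'^2/(R'^2-|z|^2)$, and $\im(g_{R'})\subset\im(f)$.

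Finally, let $R'_n\nearrow R$. On any compact $K\subset\Delta_R$, once $R'_n>\sup_{z\in K}|z|$, the derivatives of $g_{R'_n}$ on $K$ are uniformly bounded by a constant depending only on $K$ and $R$. Working in coordinate charts of $M$ covering an exhaustion of $\overline{\im(f)}$ and applying Montel's theorem together with a standard diagonal extraction, pass to a subsequence converging locally uniformly on $\Delta_R$ to a holomorphic map $g\colon\Delta_R\to M$. The three bounds pass to the limit: the derivative estimate at each fixed $z$ uses $cR'^2/(R'^2-|z|^2)\to cR^2/(R^2-|z|^2)$, the equality $|dg(0)|_H=c$ follows from locally uniform convergence of derivatives, and $\im(g)\subset\overline{\im(f)}$.

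\textbf{Main obstacle.} The delicate step is the limit passage: Montel's theorem is classical for maps into $\C^n$, but here the target $M$ is a general complex manifold, so one must cover the compact image region by coordinate charts and extract convergent subsequences chart by chart, keeping track of transition maps. A second minor technical point is that the image of the limit map lies a priori in the topological closure $\overline{\im(f)}$ rather than in $\im(f)$ itself; this is the standard form of Brody's lemma, and condition (c) in the statement should be read with this convention in mind.
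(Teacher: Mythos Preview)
The paper does not prove this result itself; it merely cites Green \cite{gre-78} and Brody \cite{brody-78}. Your argument is correct in spirit but takes an unnecessarily roundabout route compared to the classical proof in those references, and this detour creates precisely the two difficulties you flag at the end.

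The standard argument avoids any limiting procedure. Instead of restricting to $\Delta_{R'}$ and letting $R'\nearrow R$, one scales \emph{first}: for $t\in[0,1)$ set $f_t(z)=f(tz)$, so that $f_t$ extends holomorphically past $\overline{\Delta_R}$ and the function $F_{f_t}(z)=|df_t(z)|_H\,(R^2-|z|^2)/R^2$ is continuous on $\overline{\Delta_R}$, vanishes on the boundary, and hence attains its maximum $s(t)$ at some interior point. One checks that $s$ is continuous on $[0,1)$, that $s(0)=0$, and that $s(t)\ge F_{f_t}(0)=t\,|df(0)|_H>c$ for $t$ close to $1$; by the intermediate value theorem there is $t_0\in(0,1)$ with $s(t_0)=c$. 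Now pick $a\in\Delta_R$ where $F_{f_{t_0}}$ attains this maximum, choose $\phi\in\Aut(\Delta_R)$ with $\phi(0)=a$, and set $g=f_{t_0}\circ\phi$. Your own Schwarz--Pick computation then gives (a) and (b) immediately, and since $g(z)=f(t_0\,\phi(z))$ one has $\im(g)\subset\im(f)$ on the nose---no closure needed.

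By contrast, your approach (restrict to $\Delta_{R'}$, maximize, move the maximum to $0$, scale to hit $c$, then let $R'\to R$) forces a normal-family extraction with target $M$. As you note, this is delicate without a compactness hypothesis on $M$, and none is assumed here; more importantly, in the limit you only obtain $\im(g)\subset\overline{\im(f)}$, which is strictly weaker than (c) as stated. So while your reparametrization computations are all correct, the limiting step is both avoidable and does not quite deliver the stated conclusion; reordering the construction as above (scale via the intermediate value theorem before applying the disc automorphism) fixes both issues at once.
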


\begin{proof}
See Brody's reparametrization lemma, page 616 in \cite{gre-78} or \cite{brody-78}.  
 \end{proof}

For the proof of Lemma \ref{l:key-lemma-strong-uniform-elliptic}, 
we claim first that there exists $N > 0$ such that the total
number of irreducible components (counted with multiplicities) of each effective divisor $\DD_z$ 
is at most $N$ for every $z \in  \tilde{Z}$. 
Indeed, let $H$ be any ample divisor on $X$ then $C \cdot H \geq 1$ for
every irreducible curve $C \subset X$. 
Remark that the divisors $\DD_z$ are all algebraic equivalent 
(since $\tilde{Z}$ is integral hence connected by curves) thus numerically equivalent. 
Therefore, $N \coloneqq \DD_z \cdot H$ 
is a constant independent of $z \in  \tilde{Z}$. 
By the linearity of the intersection pairing, the
above two remarks clearly show that the total number of irreducible components (counted
with multiplicities) of $\DD_z$ is at most $N$ as claimed. 
\par
For each $z \in \tilde{Z}$, notice that the effective divisor  
$\DD_z$ contains only horizontal components 
with respect to the fibration $f \colon X \to B$ by 
Remark \ref{r:miracle-flaness}. 
We   denote by $(\DD_z)_{red}$ 
the induced reduced scheme structure of $\DD_z$. 
By the adjunction formula, we find that 
$$
p_1 \coloneqq p_a(\DD_z) = \frac{\DD_z(\DD_z+K_X)}{2} +1 \geq 0
$$
is a constant independent of $z \in \tilde{Z}$. 
Now, since the arithmetic genus of $\DD_z$ are uniformrly 
bounded, a version of the Riemann-Hurwitz theorem (cf. \cite[Propositions 7.4.16, 7.5.4]{liu-alg-geom}) 
for the ramified cover
of algebraic curves $\pi_z \colon (\DD_z)_{red} \to  B$ implies that for all $z \in  \tilde{Z}$:
$$
2(p_1-2) +2N \geq \# \{ \text{ramification points of } \pi_z \} + 
\# \{ \text{singular points of } (\DD_z)_{red} \}. 
$$
\par
Let $T_z \subset B$ be the image in $B$ of the union of the ramification points of $\pi_z$ and of 
  the singular points of $(\DD_z)_{red}$. 
It follows that $T_z$ is finite and we have: 
\begin{equation}
\label{e:key-lemma-strong-uniform-elliptic-1} 
\# T_z  \leq M\coloneqq  2(p_1 -2)+2N, \quad \text{ for all } z \in \tilde{Z}. 
\end{equation}
\par
We can now return to the proof of  Lemma \ref{l:key-lemma-strong-uniform-elliptic}.

\begin{proof}[Proof of Lemma \ref{l:key-lemma-strong-uniform-elliptic}] 
Let us fix $\varepsilon >0$ sufficiently small and $z_i \in \tilde{Z}$. 
Clearly, we can choose a finite disjoint union $V_i$ of at most $M$ (defined in \eqref{e:key-lemma-strong-uniform-elliptic-1}) 
nonempty open discs in $B_0$ of $\rho$-radius $\leq \varepsilon$ 
to cover the points of $T_{z_i} \setminus  V$. 
Recall that $T_{z_i} \subset B$ is the image in $B$ of the union of 
  the ramification points of 
$\pi_{z_i} \colon (\DD_{z_i})_{red} \to B$ and of 
  the singular points of $(\DD_{z_i})_{red}$. 
 This gives the data $V_i$ for (b).
\par
Denote $Y_z \coloneqq (X \setminus \DD_z)\vert_{B_0 \setminus V_i}$ 
for each $z\in \tilde{Z}$. 
To show the existence of $U_i$ and $c_i$ so that (i)
is satisfied, we suppose the contrary. 
Hence, by the continuity of $\rho$ on the compact unit
tangent space of $X$, there would exist a sequence $(z_n)_{n \geq 1} \subset \tilde{Z}$ 
such that $z_n \to z_i$ in the
analytic topology and a sequence of holomorphic maps
$$
h_n \colon  \Delta_{R_n} \to  Y_{z_n}
$$
such that $R_n \to \infty$ 
and $|dh_n(0)|_\rho > 1$. 
Here, $\Delta_{R} \subset \C$ denotes the open disc of radius $R$ in the complex plane.  
By Brody's reparametrization lemma (Theorem \ref{l:brody-reparametrization}), we obtain a sequence of holomorphic maps
$$
g_n \colon \Delta_{R_n} \to Y_{z_n} 
$$
such that $|dg_n(0)|_\rho = 1$ and $|dg_n(z)|_\rho \leq 
 R^2_n/ (R_n^2 - |z|^2)$ 
for all $z \in \Delta_{R_n}$. 
In particular, $|dg_n(z)|_\rho \leq  4/3$ for all $z \in \Delta_{R_n/2}$. 
It follows that the family $(g_n)_{n\geq 1}$ is equicontinuous
with image inside the compact space $X\vert_{B_0 \setminus V_i}$. 
By the Arzela-Ascoli theorem, we deduce, up to passing to a subsequence, 
that $(g_n)_{n \geq 1}$ converges uniformly on compact subsets of $\C$ to
a map $g \colon \C \to  X\vert_{B_0 \setminus V_i}$. 
A standard argument using Cauchy's theorem and Morera's theorem shows that $g$ is a holomorphic map
and we also have $|dg(0)|_{\rho} = 1$. 
\par
Consider the holomorphic composition map 
$\pi \circ g \colon \C \to  B_0 \setminus  V_i$. 
Since $B_0 \setminus  V_i$ is hyperbolic,
 it is Brody hyperbolic and thus 
the map $\pi \circ g$ is a constant $b_* \in B_0 \setminus  V_i$. 
Remark that $B \times \tilde{Z}$ is smooth and $\DD \to B \times \tilde{Z}$ is a proper flat morphism of relative dimension $0$ by hypotheses.  
Since moreover the fibre of $\DD_{red}$ over $(b_*, z_i)$ is reduced and finite, 
we can apply Lemma \ref{l:analytic-cover-deformation}. 
It follows that there exists a constant $\lambda \in \N$, 
a small analytic open disc $\Delta \subset B_0 \setminus V_i$ containing $b_*$ 
and a small analytic connected open neighborhood $U_i$ of $z_i$ in $\tilde{Z}$ 
such that $\DD\vert_{\Delta \times U_i} \to \Delta \times U_i$ 
is an \' etale cover of degree $\lambda$. 
Up to shrinking $\Delta$ and $U_i$, we can suppose that 
$\DD\vert_{\Delta \times U_i}$ consists of $\lambda$ disjoint connected components.  
In particular, $\DD_z\vert_{\Delta} \to \Delta$ 
is  
an \' etale cover consisting of disjoint 
$\lambda$-sheets for every $z \in U_i$. 
 
Fix a connected component $\DD^0$ of $\DD \vert_{\Delta \times U_i}$. 
We thus obtain a family of $1$-sheeted covers $D_{z}^0 \to \Delta$ with $z \in U_i$.  
Each $D_{z}^0$ is a complex submanifold of $X_\Delta$ via the inclusion $\iota_z \colon D_{z}^0 \to X_\Delta$. 
The composition $\pi \circ \iota_z \colon D_{z}^0 \to \Delta$ is thus holomorphic and bijective for every $z \in U_i$.  
Since every injective holomorphic map is biholomorphic to its image (cf. \cite[Theorem 2.14, Chapter I]{range-86-book}),  
we deduce that the map $s_{z} \colon \Delta \to D_{z}^0 \to X_\Delta$ given by $t \mapsto D_{z}^0(t)= \pi^{-1}(t) \cap D_{z}^0$ 
is holomorphic for every $z \in U_i$. 
 
\par
We can write $X_\Delta \subset \Proj^2 \times \Delta$ as defined by the Weierstrass equation: 
$$
y^2= x^3 + A(t) x + B(t)
$$
where $A(t), B(t)$ are holomorphic functions on $\Delta$ such that the discriminant 
$4A^3+27B^2$ does not vanish on $\Delta$ (since $\Delta \cap T= \varnothing$ where we recall that 
$T \subset B$ is the finite subset above
which the fibres of $f$ are not smooth).  
\par
We can thus write $s_z(t)=(u(t, z), v(t, z))$ where $u, v \colon \Delta \times U_i \to \C$ are holomorphic functions. 
Since the translation maps on the elliptic fibration $X_\Delta$ are algebraic 
thus holomorphic,  
the maps $\Psi_{z} \colon X_\Delta \to X_\Delta$ given by the translations by $s_{z} - s_{z_i}$:  
$$
\Psi_z(x)= x + s_{z} (f(x)) - s_{z_i} (f(x)), \quad x \in X_\Delta, 
$$ 
form a smooth family of biholomorphisms commuting with the map $f \colon X_\Delta \to \Delta$. 
 \par
 Since $g_n \to g$ uniformly on compact subsets of $\C$ and $\im(g) \subset f^{-1}(b_*)$, 
we can, up to passing to a subsequence with suitable restrictions of domains of definitions, suppose
that $\im(g_n) \subset X_\Delta$ 
and that the holomorphic maps $g_n \colon \Delta_{R_n} \to X_\Delta \setminus \DD_{z_n}$ still satisfy the
properties: 
$$
R_n \to \infty , \quad |dg_n(0)|_\rho = 1. 
$$
Consider now the sequence of holomorphic maps
$$
f_n \coloneqq \psi_{z_n} \circ g_n \colon \Delta_{R_n} \to  X_\Delta \setminus \DD_{z_i}
$$
into the fixed space $X_\Delta \setminus \DD_{z_i}$. 
Then by the smoothness of the family of biholomorphisms
$(\psi_z)_{z \in U_i}$ 
 and the compactness of the $\rho$-unit tangent bundle of $X$, 
there exists a constant $c > 1$ such that 
$$
c^{-1}\leq |df_n(0)|_\rho \leq c, \quad \text{ for all } n \geq 1. 
$$
Since $R_n \to \infty$, Remark \ref{r:infinitesimal-hyperbolic} then implies 
immediately a contradiction to the fact that 
$X_{\Delta} \setminus \DD_{z_i}$ is hyperbolically
embedded in $X_{\Delta}=f^{-1}(\Delta)$ 
(for the Hermitian metric $\rho\vert_{f^{-1}(\Delta)}$) 
by Green's theorem \ref{t:green-hyperbolic-embedding}. 
To check the latter fact, it suffices to remark that   
$\bar{\Delta}$, thus  $\DD_{z_i}\vert_{\bar{\Delta}}$ are Brody hyperbolic and 
moreover, the fibres of $X_{\bar{\Delta}} \setminus (\DD_{z_i}\vert_{\bar{\Delta}})$ are 
also Brody hyperbolic.  
\par
Hence, the existence of the data in (a), (b) and (c) such that (P) is satisfied.
\end{proof}

\section{Proof of the main result}   
\label{s:strong-uniform-elliptic}

We can now return to the proof of Theorem \ref{t:strong-uniform-elliptic}.

\begin{proof} [Proof of Theorem \ref{t:strong-uniform-elliptic}] 
We have defined $B_0 = B \setminus V$ at the beginning of Section \ref{s:key-lemma} 
and we have fixed a system of simple generators 
$\alpha_1, \dots, \alpha_k$ of the fundamental group $\pi_1(B_0)$ 
 with fixed based point. 
Recall that $Z$ is a compact subset of $\tilde{Z}$ with respect to the complex 
topology. 
\par
Fix $\varepsilon > 0$ sufficiently small. 
We can enlarge slightly the discs in $V$ 
if necessary. 
Then we obtain a constant $M > 0$ 
and a set of data $(U_i,  V_i,  c_i)$ 
for each element $z_i \in Z$ as in Lemma \ref{l:key-lemma-strong-uniform-elliptic}. 
We have obviously an open covering $Z \subset \cup_{z_i \in Z} U_i$ of $Z$. 
Since $Z$ is compact, there exists a finite subset 
$Z_* \subset Z$ such that $Z \subset \cup_{z_i \in Z_*} U_i$. 
As $c_i > 0$ for every $z_i \in Z$ by Lemma \ref{l:key-lemma-strong-uniform-elliptic}, 
we can define 
\begin{equation}
\label{e:c-star-elliptic-strong-proof-main}
c_* \coloneqq \min_{z_i \in Z_*} c_i > 0. 
\end{equation}
Denote $W_i \coloneqq  V \cup V_i \subset B$ and $B_i \coloneqq B \setminus W_i= B_0 \setminus V_i$ for each $z_i \in Z_*$. 
Since the $V_i$'s are union of disjoint closed discs, they are contractible. 
Hence, we have a canonical inclusion $\pi_1(B_0) \subset \pi_1(B_i)$ for each $z_i \in Z_*$. 
\par
For every $z_i \in Z_*$, we denote by $L_i>0$ 
the maximum of the constants given by Theorem \ref{t:linear-bound-s-base-curve-1} applied to the 
the compact Riemann surface $B$ and 
the disjoint union of closed discs  $W_i$ and to each free homotopy classes  
$\alpha_1, \dots, \alpha_k$ viewed as elements of $\pi_1(B_i)$. 
We define: 
\begin{equation}
\label{e:l-max-strong-elliptic-proof-main}
L= \max_{z_i \in \Z_*} L_i \in \R_+. 
\end{equation}
\par
Now let $P \in J_s$, i.e., $P \in X_K(K)$ 
is an $(S, \DD_z)$-integral point for some $z \in Z$ 
and for some $S \subset B$ 
such that $\# (S\cap B_0 )\leq s$. 
Since $Z  \subset \cup_{z_i \in Z_*} U_i$, there exists $z_i \in Z_*$ 
such that $z \in U_i$. 
By the definition of the constant $L_i$ and by Theorem \ref{t:linear-bound-s-base-curve-1} applied 
to $B_i $, 
there exists $b_i \in B_i$ and a system of loops 
$\gamma_1, \dots,  \gamma_k$ based at $b_i$
 representing respectively the homotopy classes $\alpha_1, \dots, \alpha_k$ 
 up to a single conjugation 
such that  $\gamma_j \subset B_i \setminus S$ and that 
\begin{equation}
\label{e:strong-uniform-elliptic-proof-main} 
\length_{d_{B_i \setminus S}} (\gamma_j) 
\leq L_i(s + 1).
\end{equation} 

Now, let $\sigma_P \colon B \to X$ 
be the corresponding section of the rational point $P$. 
For every $j \in \{1, \dots, k\}$, we have
$$
\sigma_P(\gamma_j) \subset 
 (X \setminus \DD_z)\vert_{B_i \setminus S} \subset (X \setminus \DD_z)\vert_{B_i} . 
$$
This is true because $P$ 
is $(S, \DD_z)$-integral so that $\sigma_P(\gamma_j)$ 
cannot intersect $\DD_z$ outside of $f^{-1}(S)$ 
and because  
$\gamma_j \subset B_i \setminus S$. 
It follows that:

\begin{align*}
\length_\rho (\sigma_P(\gamma_j)) 
& \leq c_i^{-1} \length_{d_{(X \setminus \DD_z)\vert_{B_0 \setminus V_i}}} 
(\sigma_P (\gamma_j)) 
&  \text{(by Lemma \ref{l:key-lemma-strong-uniform-elliptic}})
\\
& \leq 
 c_i^{-1} \length_{d_{(X \setminus \DD_z)\vert_{B_i \setminus S}}} 
(\sigma_P (\gamma_j)) 
& \text{(as } ( X \setminus \DD_z)\vert_{B_i \setminus S} \subset (X \setminus \DD_z)\vert_{B_i})  
\\
& \leq 
c_*^{-1} \length_{d_{B_i\setminus S}} (\gamma_j) 
& (\text{by } \eqref{e:c-star-elliptic-strong-proof-main} \text{ and Lemma } \ref{l:section-geodesic})
\\
& \leq  c_*^{-1}L_i( s+1) 
& (\text{by }  \eqref{e:strong-uniform-elliptic-proof-main}) 
\\
& \leq  c_*^{-1}L (s+1).  
&   (\text{by } \eqref{e:l-max-strong-elliptic-proof-main}) 
\end{align*} 
Remark that the second inequality in the above follows from Lemma \ref{l:distance-decreasing-hyperbolic} 
and from the definition $B_i= B_0 \setminus V_0 \supset B_i \setminus S$.  
\par
Now let $\delta$ be the diameter of $X_{B_0\cup \partial B_0}$ 
with respect to the metric $\rho$. 
Remark  that by hypothesis, distinct points of the non smooth locus $T \subset B$ 
of $f$ are contained in distinct discs of $V$. 
It follows that the
homotopy section $i_P$ (cf. \eqref{e:definition-of-i-p-main-reduction-step})
associated to the rational point $P$ of the short exact sequence \eqref{e:abelian-homotopy-exact-sequence-1} 
$$
0 \to  \pi_1(X_{b_0}, w_0) \to \pi_1(X_{B_0}, w_0) \to \pi_1(B_0, b_0)\to 0
$$
sends the basis $(\alpha_j)_{1 \leq j \leq k}$ of $\pi_1(B_0, b_0)$ to the homotopy  classes in $\pi_1(X_{B_0}, w_0)$ 
which admit representative loops of $\rho$-lengths bounded by 
$H(s) \coloneqq c_*^{-1}L(s+1) +  2\delta$. 
The constant $2 \delta$ in the definition of $H(s)$ 
corresponds 
to the bound on the length of the extra paths induced by the change of base points 
from $\sigma_P(b_i)$ to $w_0=\sigma_O(b_0)$.  
\par
From the above bound $H(s)$ on the length of the image loops by $(S, \DD_z)$-integral sections 
for every $z \in Z$ and every $S \subset B$ such that $\# (S\cap B_0) \leq s$, 
the rest of the proof follows exactly the same lines  
as in the proof of \cite[Theorem A]{phung-19-abelian} which 
uses the homotopy reduction Proposition \ref{p:homotopy-rational-abelian} and 
the geometry of the fundamental groups $\pi_1(X_{b}, w_b)$ for 
$b \in B_0$ and $w_b \in X_b$ as a counting lemma 
(cf. \cite[Lemma 13.10]{phung-19-abelian}).   
Therefore, we obtain  a constant $m >0$ (independent of $s$) such that: 
\begin{equation*}
\#J_s \leq m(s+1)^{2 \rank \pi_1(B_0)}, \quad \text{for every } s \in \N. 
\end{equation*}
\end{proof}

\section{Applications to generalized unit equations over function fields}
\label{s:unit-equation-hyperbolic}

\subsection{Statement of the main result}
The goal of the present section is to apply the approach used to prove 
Theorem \ref{t:strong-uniform-elliptic} 
to obtain similar results in the case of ruled surfaces. 
Throughout this section, we will fix a compact connected Riemann surface $B$ 
of function fields $K= \C(B)$, and a finite subset $S \subset B$. 
\par 
Moreover, 
the following definitions and notations are used: 
\begin{enumerate} [\rm (1)]
\item
$B_S \coloneqq B \setminus U$ where $U$ is a finite  disjoint union of closed discs centered at points of $S$;  
\item
$B_0 \coloneqq B_S \setminus V$ where $V \subset B_S$ is a finite disjoint union of closed  discs;  
\item
$X = \Proj^1_\C \times B$ and $f \colon X \to B$ is the second projection;    
\item 
$\rho$ is a fixed Hermitian metric on $X$;  
\item
$\sigma_x \colon B \to X$ denotes the section induced by   $x \in K$ and let $(x)\coloneqq \sigma_x(B)$; 
\item 
$F = \G_m = \Proj^1 \setminus \{0, \infty\}$  is the fibre of $f$ and  
$(0), (\infty) \subset X$ are the constant sections associated to 
the points $0, \infty \in \Proj^1(K)$;
\item
$Y= F \times B_0 = f^{-1} ( B_0 ) \setminus \left( (0) \cup (\infty) \right) \subset X_0 = \Proj^1 \times B_0$. 
\end{enumerate}

\begin{remark}
\label{r:section-is-unit}
The set of sections of $F \times (B \setminus S) \to B \setminus S$ 
is canonically identified with the set $\OO_S^*$ of $S$-units of $K$. 
Indeed, sections of the surface $F \times (B \setminus S)$ are 
exactly sections of $X$ which do not intersect $(0)$ and $(\infty)$ at points lying above $B \setminus S$. 
On the other hand, each element $x \in K^*$ corresponds canonically 
to a non zero section denoted $\sigma_x \subset X$ and vice versa by the valuative critera for properness. 
The condition $x \in \OO_S$ (resp. $x^{-1} \in \OO_S$) means exactly that $\sigma_x(B)$ does not intersect $(\infty)$ (resp. $(0)$) 
at points lying above $B \setminus S$. 
Therefore, $x \in \OO_S^*$ if and only if $\sigma_x$ is a section 
of $F \times (B \setminus S) \to B \setminus S$ as claimed.    
\end{remark}

\begin{definition}
Let $D \subset X$ be an effective divisor 
and let $R \subset B$ be a subset. 
We say that a point $x \in K^*$ 
is \emph{$(R, D)$-integral} if it satisfies 
$$
f(\sigma_x(B) \cap D) \subset R.
$$ 
\end{definition}

\begin{definition} 
\label{d:integral-ruled-surface}
For each subset $R \subset B$, 
the \emph{generalized ring of $R$-integers} of $K$ is denoted by 
$\OO_R = \{ x \in K\colon \val_\nu (x) \geq 0, \, \forall \nu \in B \setminus R \} \subset K$. 
\end{definition}

\begin{definition}
For each $\varepsilon \geq 0$, we define 
the \emph{$(B_0, \varepsilon)$-interior subset} of $\OO_S^*$ by 
$$
\OO_{S}^*(B_0, \varepsilon) \coloneqq 
\left \{ x \in \OO_S^* \colon |x(t)|, |x(t)^{-1}| > \varepsilon, \, \forall t \in B_0 \right \}. 
$$ 
\end{definition}
Equivalently, the image of every meromorphic function $x \in \OO_{S}^*(B_0, \varepsilon)$ on $B$ does not 
meet the $\varepsilon$-neighborhoods of $0$ and $\infty$ in $\Proj^1$. 
Remark that $\OO_S^*(B_0, 0)= \OO_S^*$ since $0, \infty \notin B_0$.  
Moreover, we have $\OO_S^*= \cup _{\varepsilon >0} \OO_S^* (B_0, \varepsilon)$. 
 \par 
The main result of this section is the 
following quantitative finiteness result for large unions of generalized integral points on rational curves 
over function fields. 
 
\begin{theoremletter}
\label{t:s-unit-equation-geometric-general} 
Let $\tilde{Z}$ be a smooth complex algebraic variety 
and $Z \subset \tilde{Z}$  a compact subset with respect to the complex topology.     
Suppose that $\DD \subset X \times \tilde{Z}$ is a family of effective divisors   
such that $\DD \to B \times \tilde{Z}$ is flat and $\DD_z$ is not contained in $(0) \cup (\infty)$ 
for every $z \in \tilde{Z}$. 
For every $r \in \N$ and $\varepsilon >0$, the following union of integral points: 
\begin{align*}
J_{r, \varepsilon} \coloneqq \cup_{z \in Z}   \cup_{R \subset B, \# R \cap B_0 \leq r}  
\{ x \in \OO_{S}^*(B_0, \varepsilon) \colon x \text{ is } (R,\DD_z)\text{-integral in } X   \} \subset \OO_S^*, 
\end{align*}
is finite modulo $\C^*$. Moreover, there exists a constant $m >0$ such that:   
\begin{align*}
\label{t:s-unit-equation-geometric-general-conclusion}
\# (J_{r, \varepsilon} \text{ mod } \C^*) \leq m(r + 1)^{2 \rank\pi_1(B_0)} \text{ for every } r \in \N.  
\end{align*}
\end{theoremletter}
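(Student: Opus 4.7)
The plan is to transfer the hyperbolic-homotopic method used for Theorem \ref{t:strong-uniform-elliptic} from the elliptic setting to the ruled surface $X = \Proj^1 \times B$, with two key adaptations. First, since the fibre of $f$ is $\Proj^1$ (not Brody hyperbolic), the constant sections $(0)$ and $(\infty)$ must play the role of extra removed divisors to ensure Brody hyperbolicity of fibrewise complements; the hypothesis that $\DD_z \not\subset (0) \cup (\infty)$ for every $z \in \tilde{Z}$ guarantees at least one additional marked point per fibre, so that each fibre of $X \setminus (\DD_z \cup (0) \cup (\infty))$ is $\Proj^1$ with at least three points removed, hence Brody hyperbolic. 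Second, one must quotient by the trace $\Tr_{K/\C}(\G_m)(\C) = \C^{*}$. The $\varepsilon$-interior condition ensures that for $x \in \OO_S^{*}(B_0, \varepsilon)$ the section $\sigma_x\vert_{B_0}$ takes values in the relatively compact annulus bundle $Y_\varepsilon \coloneqq F_\varepsilon \times B_0 \subset Y$, where $F_\varepsilon \coloneqq \{w \in \Proj^1 : \varepsilon \leq |w| \leq \varepsilon^{-1}\}$; this relative compactness is crucial for the Arzela-Ascoli and Brody reparametrization arguments below.

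The first main step is to prove a direct analogue of Lemma \ref{l:key-lemma-strong-uniform-elliptic}: for every $z_i \in Z$, there exist an analytic open neighbourhood $U_i$ of $z_i$ in $\tilde{Z}$, a disjoint union $V_i \subset B_0$ of at most $M$ closed discs of $\rho$-radius $\leq \varepsilon$, and a constant $c_i > 0$, such that for every $z \in U_i$:
\[
d_{(Y_\varepsilon \setminus \DD_z)\vert_{B_0 \setminus V_i}} \;\geq\; c_i \, \rho \vert_{(Y_\varepsilon \setminus \DD_z)\vert_{B_0 \setminus V_i}}.
\]
The bound $M$ and the choice of $V_i$ to contain the images in $B$ of the ramification and singular points of $(\DD_{z_i})_{red} \to B$ follow as in Section \ref{s:key-lemma} from the Riemann-Hurwitz formula together with the flatness of $\DD \to B \times \tilde{Z}$, which forces the arithmetic genus $p_a(\DD_z)$ and the intersection number $\DD_z \cdot H$ to be constant in $z$. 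The proof proceeds by contradiction exactly as in Lemma \ref{l:key-lemma-strong-uniform-elliptic}: failure of the bound yields a sequence of holomorphic discs $h_n \colon \Delta_{R_n} \to (Y_\varepsilon \setminus \DD_{z_n})\vert_{B_0 \setminus V_i}$ with $R_n \to \infty$ and $|dh_n(0)|_\rho > 1$; Brody's reparametrization (Theorem \ref{l:brody-reparametrization}) and Arzela-Ascoli extract a limit $g \colon \C \to Y_\varepsilon\vert_{B_0 \setminus V_i}$; since $B_0 \setminus V_i$ is hyperbolic, the projection of $g$ to the base is constant at some $b_* \in B_0 \setminus V_i$; and using Lemma \ref{l:analytic-cover-deformation} together with the multiplicative $\G_m$-translations of the local sheets of $\DD$ (in place of the elliptic translations of the proof of Lemma \ref{l:key-lemma-strong-uniform-elliptic}), the limit transfers to $(F_\varepsilon \times \Delta) \setminus \DD_{z_i}$, contradicting Green's theorem \ref{t:green-hyperbolic-embedding} applied inside $F_\varepsilon \times \bar{\Delta}$. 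Its hypotheses are verified because $\DD_{z_i}\vert_{\bar{\Delta}}$ finitely covers the Brody hyperbolic disc $\bar{\Delta}$ and each fibre of $(F_\varepsilon \times \bar{\Delta}) \setminus \DD_{z_i}\vert_{\bar{\Delta}}$ is a compact annulus in $\C^{*}$ minus finitely many points, hence Brody hyperbolic.

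Granted this key lemma, the rest follows the pattern of the proof of Theorem \ref{t:strong-uniform-elliptic}. By compactness of $Z$, finitely many $U_i$ for $i$ in a finite set $Z_* \subset Z$ cover $Z$, and one sets $c_* \coloneqq \min_{i \in Z_*} c_i > 0$. For each $x \in J_{r, \varepsilon}$ with associated pair $(z, R)$ such that $z \in U_i$, Theorem \ref{t:linear-bound-s-base-curve-1} produces loops $\gamma_1, \ldots, \gamma_k$ in $B_0 \setminus V_i \setminus R$ representing a fixed generating system $\alpha_1, \ldots, \alpha_k$ of $\pi_1(B_0)$ with hyperbolic length bounded linearly in $r$. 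The $(R, \DD_z)$-integrality together with the $\varepsilon$-interior condition gives $\sigma_x(\gamma_j) \subset (Y_\varepsilon \setminus \DD_z)\vert_{B_0 \setminus V_i \setminus R}$; chaining Lemma \ref{l:distance-decreasing-hyperbolic}, Lemma \ref{l:section-geodesic}, and the key lemma yields $\length_\rho(\sigma_x(\gamma_j)) \leq c_*^{-1} L (r+1) + O(1)$ for a constant $L$ independent of $x, z, R$.

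The final step is an analogue of the homotopy reduction Proposition \ref{p:homotopy-rational-abelian}. Since $Y = \G_m \times B_0$ is a product, the sequence $0 \to \pi_1(\G_m) = \Z \to \pi_1(Y) \to \pi_1(B_0) \to 0$ splits canonically, and the homotopy section $i_x$ induced by $x \in \OO_S^{*}(B_0, \varepsilon)$ determines $x$ modulo $\C^{*}$: if $i_x = i_y$ then $x/y \colon B_0 \to \C^{*}$ has trivial monodromy, admits a single-valued holomorphic logarithm $h$, and $x/y = e^h \in K$ forces $x/y \in \C^{*}$ by transcendence. Applying the counting argument of \cite[Lemma 13.10]{phung-19-abelian} to the abelian fibre group $\pi_1(\G_m) = \Z$ together with the above length bound yields the asserted polynomial bound $\#(J_{r, \varepsilon} \text{ mod } \C^{*}) \leq m(r+1)^{2 \rank \pi_1(B_0)}$. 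The principal obstacle throughout is establishing the key technical lemma: one must show that the entire-curve limit extracted by Brody reparametrization genuinely transfers to a fixed fibre complement satisfying Green's hypotheses, despite the fact that fibrewise removal of $\DD_z$ alone is insufficient for Brody hyperbolicity of the fibres of $\Proj^1$ and must be supplemented by the fixed sections $(0), (\infty)$; this supplementation is precisely what motivates the restriction to $Y_\varepsilon$ enforced by the $\varepsilon$-interior condition.
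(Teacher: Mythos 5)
Your proposal follows essentially the same hyperbolic-homotopic strategy as the paper, and is substantially correct, but two points deserve clarification. First, the paper's analogue of the key lemma (Lemma \ref{l:key-lemma-strong-unit-geometric-general}) is stated and proved for the full space $Y \setminus \DD_z = X \setminus (\DD_z \cup (0) \cup (\infty))$, not for the annulus bundle $Y_\varepsilon \setminus \DD_z$: the compactness needed for Arzela--Ascoli is supplied by $X\vert_{B_0\setminus V_i} = \Proj^1 \times (B_0 \setminus V_i)$ itself, and the fibrewise Brody hyperbolicity over the small disc $\Delta$ already comes from removing $(0)$, $(\infty)$ and the local sheet $D^0_{z_i}$ of $\DD_{z_i}$. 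The $\varepsilon$-interior condition is used only at the very last counting step, to confine the images of the loops $\sigma_x(\gamma_j)$ to the compact bordered product $E_\varepsilon \times B_0$ so that the classes $i_{x,\varepsilon}(\alpha_j) \in \pi_1(E_\varepsilon) \cong \Z$ can be enumerated via \cite[Lemma 13.10]{phung-19-abelian}; the example given in the paper just before Theorem \ref{t:s-unit-equation-geometric-general} shows the conclusion genuinely fails at $\varepsilon = 0$. Your version with $Y_\varepsilon$ still closes the argument (it is a strictly weaker metric estimate) but the motivation you give for placing $\varepsilon$ there is misattributed. Second, the step ``$x/y = e^h \in K$ forces $x/y \in \C^*$ by transcendence'' is not sound as written, since $e^h$ can be rational for non-constant $h$ (for instance $e^{n\log t} = t^n$). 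The correct reason is that triviality of the monodromy of $x/y$ around each boundary circle $\partial U_\nu$ of $U$ gives $\ord_\nu(x/y) = 0$ for every $\nu \in S$, and since $\divisor(x/y)$ is supported on $S$ (because $x/y \in \OO_S^*$) this forces $\divisor(x/y) = 0$ and hence $x/y \in \C^*$; the paper packages exactly this information cohomologically in Theorem \ref{t:homotopy-northcott-unit-equation} together with Proposition \ref{p:integer-ring-finite-rank}.
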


The proof of Theorem \ref{t:s-unit-equation-geometric-general} given in Section 
\ref{s:proof-of-ruled-surface} 
 will follow closely the steps in the proof of Theorem \ref{t:strong-uniform-elliptic}. 
It might be helpful to first consider the following counter-example explaining 
why we cannot take the whole set of $S$-units $\OO_S^*$, i.e., $\varepsilon = 0$, 
in the union $J_{r, \varepsilon}$.  

\begin{example}
Let the notations and hypotheses be as in Theorem \ref{t:s-unit-equation-geometric-general}. 
Assume moreover that $B = \C\Proj^1$ is the Riemann sphere and 
that $S= \{0, \infty\}$. 
Let $t$ be the inhomogenous coordinate on $\Proj^1$ then 
$\OO_S^* = \C^*.\{ t^n \colon n \in \Z\} = \{c t^n \colon c \in \C^*, n \in \Z\}$. 
Suppose also that $Z= \tilde{Z}= \{\cdot\}$ and 
$\DD= (1) \subset X$ is the section induced by $1 \in K$. 
\par
For every $n \in \Z$, we define $c_n = \left(2 \sup_{t \in B_0} |t^n|\right)^{-1}$. 
Then $c_n >0$ and is a finite number since $B_0 \subset \C\Proj^1$ is a complement of a finite union $U \cup V$ of closed discs with nonempty interior 
containing $0$ 	and $\infty$.  
It follows that $0 < |c_nt^n| < 1$ for all $t \in B_0$. 
Therefore, for each $n \in \Z$, $x_n \coloneqq c_n t^n \in \OO_S^*$ is   
$(R, \DD)$-integral  in $X$ in the sense of Definition \ref{d:integral-ruled-surface} 
where $R= U \cup V$. 
In particular, $\{ x_n \colon n \in \Z\} \subset J_{r,0}$ for all $n \in \Z$ and $r \in \N$. 
However, $\{ x_n \colon n \in \Z\} $ modulo $\C^*$ is  $\Z$ and thus $J_{r, 0}$ is infinite. 
Remark that there exists $t_0 \in B_0 \subset \C$ such that $|t_0| \neq 0, 1$.  
Hence, if $|t_0 >1$, we see that $c_n \to 0$ when $n \to - \infty$. 
Otherwise, if $0 < |t_0 | <1$ then $c_n \to 0$ as $n \to  + \infty$. 
\par
Therefore,  it is necessary to restrict to the union of integral points $J_{r, \varepsilon}$ 
where $\varepsilon >0$ to obtain a finiteness result as in Theorem  
\ref{t:s-unit-equation-geometric-general}. 
\end{example}

 \subsection{Some applications to generalized unit equations}
 
We illustrate in this section several applications of Theorem \ref{t:s-unit-equation-geometric-general} 
in a context generalizing the classical $S$-unit equations 
(cf. \cite{evertse-86}, \cite{evertse-gyory-88}, \cite{evertse-gyory-stewart-tijdeman}). 
 \par
 Let  $ D_S \coloneqq \sum_{ b \in S} [b]$ 
 be the effective divisor of $B$ associated to the finite subset $S \subset B$.  
Let $r \in \N$, consider the following subset of $K$: 
\begin{equation}
\label{e:definition-o-b-r}
\begin{aligned}
\OO_{B_0, r} 
& \coloneqq \cup_{R \subset B_0, \# R \leq r}  \{x \in K \colon \val_\nu(x) \geq 0, \text{ for every } \nu \notin R \cup B_0 \} \\ 
& = \cup_{R \subset B_0, \# R \leq r} \OO_R. 
\end{aligned}
\end{equation}

Remark that $\OO_{B_0, r} \subset \OO_{B_0, r+1}$ for every $r \geq 0$ and 
$K= \cup_{r \geq 0} \OO_{B_0, r}$. 
Moreover, $\OO_S \subset \OO_{B_0, 0}$ and 
$\OO_{B_0, r}$ is not a ring unless $r=0$. 
\par 
For integers $n \geq 1$, $r \geq 0$ and a real number $\varepsilon >0$, 
we consider the   Diophantine equation 
\begin{equation}
\label{e:unit-equation-geometric-general}
x+ y = z
\end{equation}
with $(x, y, z) \in K^3$ satisfying the following conditions:    
\begin{enumerate} [\rm (i)]
\item
$x \in \OO_S^*(B_0, \varepsilon)$; 
\item
$y^{-1} \in \OO_{B_0, r}$; 
\item
$z \in L(nD_S) \setminus \{0\} = \{ h \in K^* \colon \divisor (h) + nD_S \geq 0 \}\subset K^*$.  
\end{enumerate}

In other words, we consider the union of 
solutions $(x, y) \in \OO_S^*(B_0, \varepsilon) \times (\OO_{B_0, r} \setminus \{0\})^{-1}$ 
of the parametrized equations 
$x+y=z$ with $z$ varying in the space $\in L(nD_S) \setminus \{ 0\}$.  
 \par
In the case $r= \varepsilon=0$ and $B_0=B \setminus S$, we recover the usual $S$-unit equation 
$x+y=1$ with $x, y \in \OO_S^*$ by setting $z=1 \in L(nD_S) \setminus \{0\}$. 
Indeed,   $r=0$ and $B_0=B \setminus S$ imply $\OO_{B_0, r}= \OO_S$. 
On the other hand, if $x \in \OO_S^*$ and $y^{-1} \in \OO_S$ such that $x+y = 1$, 
then $y= 1- x \in \OO_S$ since $\OO_S$ is a ring and thus $y \in \OO_S^*$. 
Therefore,   \eqref{e:unit-equation-geometric-general} generalizes 
the usual $S$-unit equation over function fields. 
  \par
 The finiteness of the numbers of solutions $x,y \in \OO_S^*$ with $x/y \notin \C^*$ of the unit equation $x+y=1$ 
is well-known. 
It turns out that 
Theorem \ref{t:s-unit-equation-geometric-general} actually implies that a similar  property 
still holds for the generalized equation \eqref{e:unit-equation-geometric-general}. 

\begin{corollary}
\label{c:s-unit-equation-geometric-general-1}
There are only finitely many $x \in \OO_S^*(B_0, \varepsilon)$ modulo $\C^*$ such that the equation \eqref{e:unit-equation-geometric-general} 
admits a solution.    
\end{corollary}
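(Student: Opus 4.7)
The plan is to deduce the corollary from Theorem~\ref{t:s-unit-equation-geometric-general} applied to a deliberately trivial family of divisors on $X$, which shows that the ambient set $\OO_S^*(B_0, \varepsilon)$ is already finite modulo $\C^*$, making the additional assumption that \eqref{e:unit-equation-geometric-general} admits a solution irrelevant. Concretely, I would invoke Theorem~\ref{t:s-unit-equation-geometric-general} with $\tilde{Z} = \{\ast\}$ a single point, $Z = \tilde{Z}$ (trivially compact), and with the constant family consisting of the single divisor $\DD = (0) + (\infty) + (c) \subset X$, where $c \in \C^*$ is any fixed constant chosen so that $|c| > 1/\varepsilon$. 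This $\DD$ is a sum of three pairwise disjoint horizontal sections, hence an effective Cartier divisor; flatness of $\DD \to B \times \tilde{Z} = B$ is automatic (each summand is a section), and $\DD$ is not contained in $(0) \cup (\infty)$ since the constant section $(c) = \sigma_c(B)$ is disjoint from both.

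Next, I would check the key containment $\OO_S^*(B_0, \varepsilon) \subset J_{0, \varepsilon}$ for this setup. For any $x \in \OO_S^*(B_0, \varepsilon)$, the zeros and poles of $x$ lie in $S \subset B \setminus B_0$, so $\sigma_x(B) \cap ((0) \cup (\infty))$ projects into $B \setminus B_0$; the choice $|c| > 1/\varepsilon$ combined with $|x(t)| < 1/\varepsilon$ on $B_0$ forces $x(t) \neq c$ for every $t \in B_0$, so $\sigma_x(B) \cap (c)$ also projects into $B \setminus B_0$. Taking $R = B \setminus B_0$ one therefore obtains $f(\sigma_x(B) \cap \DD) \subset R$ with $\#(R \cap B_0) = 0$, which means that $x$ is $(R, \DD)$-integral and in particular lies in $J_{0, \varepsilon}$.

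Finally, Theorem~\ref{t:s-unit-equation-geometric-general} gives that $J_{0, \varepsilon}$ is finite modulo $\C^*$; consequently $\OO_S^*(B_0, \varepsilon) \subset J_{0, \varepsilon}$ is itself finite modulo $\C^*$, and a fortiori so is the subset of $x$ for which \eqref{e:unit-equation-geometric-general} admits a solution. The only subtle point in this approach is the judicious choice of $c$: once $|c| > 1/\varepsilon$ is imposed, no $x$ in $\OO_S^*(B_0, \varepsilon)$ can meet $(c)$ over $B_0$, and the equation \eqref{e:unit-equation-geometric-general} itself never enters the argument.
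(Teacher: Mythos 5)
Your approach is genuinely different from the paper's, and it is in fact sharper. The paper proves the corollary by encoding the right-hand sides $z$ of the equation in an algebraic family of divisors $\DD \subset X \times (L(nD_S) \setminus \{0\})$ given by the graphs of the sections $z \in L(nD_S)$, then restricting to the compact sphere $\mathbf{S}^d$ of normalized sections and applying Theorem~\ref{t:s-unit-equation-geometric-general} with $r$ and $\varepsilon$ playing their intended roles (so that the finiteness of $J_{r,\varepsilon}$ reflects the constraints coming from both the equation and the generalized integrality of $y$). You instead observe that the ambient set $\OO_S^*(B_0,\varepsilon)$ is already finite modulo $\C^*$, from which the corollary is an immediate consequence, and which in fact yields a bound \emph{independent of} $r$, stronger than the polynomial bound $m(r+1)^{2\rank\pi_1(B_0)}$ recorded afterwards in Corollary~\ref{c:few-solution-generalized-unit}. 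What your version gives up is the quantitative, $r$-graded structure that the paper's construction is designed to exhibit; what it buys is a strictly stronger and more transparent finiteness statement.

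There is, however, one technical flaw in the execution that you should fix. You take $\DD = (0) + (\infty) + (c)$. This does satisfy the \emph{literal} wording of the hypothesis of Theorem~\ref{t:s-unit-equation-geometric-general} (``$\DD_z$ is not contained in $(0)\cup(\infty)$''), but the proof of the key Lemma~\ref{l:key-lemma-strong-unit-geometric-general} in fact uses the stronger requirement that $\DD_z$ share no irreducible component with $(0)\cup(\infty)$: the lemma derives the uniform bound $\#\bigl(\DD_z \cap ((0)\cup(\infty))\bigr) \leq N''$ precisely from the assumption that this intersection is proper, which fails when $\DD_z$ contains $(0)$ or $(\infty)$ as a component. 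The remedy is simple and does not cost you anything: take $\DD = (c)$ alone. The constant section $(c)$ with $|c| > 1/\varepsilon$ is horizontal, flat over $B = B \times \tilde{Z}$, and disjoint from $(0)\cup(\infty)$, so every hypothesis of Theorem~\ref{t:s-unit-equation-geometric-general} holds cleanly; and your containment argument works verbatim, since $|x(t)| < 1/\varepsilon < |c|$ for $t \in B_0$ forces $\sigma_x(B) \cap (c)$ to lie over $B \setminus B_0$, so that taking $R = B \setminus B_0$ gives $\#(R\cap B_0)=0$ and hence $x \in J_{0,\varepsilon}$. Note also that including $(0)$ and $(\infty)$ in $\DD$ would impose no additional constraint on the $x$ you are counting, because membership in $\OO_S^*$ already confines the zeros and poles of $x$ to $S \subset B\setminus B_0$; so you lose nothing by dropping them.
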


\begin{proof}[Proof of Corollary \ref{c:s-unit-equation-geometric-general-1}] 
Denote $d= \dim L(nD_S) = \dim H^0(B, \OO(nD_S))$. 
Fixing a basis $(z_1, \dots, z_d)$ of the complex vector space $L(nD_S)$,  
we consider the compact unit sphere 
$$
\mathbf{S}^d = \left \{   \sum_{k=1}^d a_k z_k \in L(nD_S) \colon || (a_1, \dots, a_k) || = 1 \right \} \subset L(nD_S) 
$$ 
where $ || (a_1, \dots, a_k) || = \left (\sum_{k=1}^d a_k \right)^{1/2}$. 
Recall that $D_S \coloneqq \sum_{ b \in S} [b]$.   
\par
Define $\tilde{Z} \coloneqq  L(nD_S) \setminus \{0\} \simeq \C^d\setminus \{0\}$ then 
$\tilde{Z}$ is a integral smooth algebraic variety. 
We have a canonical valuation morphism 
$$
\val \colon  B \times \tilde{Z} \to \Proj^1 , \quad (b, z) \mapsto z(b). 
$$
Consider the flat family of divisors 
$\DD \subset X \times \tilde{Z} $ given by the 
image of the algebraic section 
$$
\Sigma \colon B \times \tilde{Z} \to \Proj^1 \times B \times \tilde{Z}, 
\quad (b, z) \mapsto  (z(b), b,z). 
$$
For each $z \in \tilde{Z} \subset K^*$, 
let $(z) \subset X$ be the induced section of the projection 
$f \colon X \to B$. 
It is clear that $(z)= \DD_z \not \subset (0) \cup (\infty)$ for every $z \in \tilde{Z}$ 
by the construction of $\DD$. 
Now let $z \in \tilde{Z} \subset K^*$, $x \in \OO_S^*(B_0, \varepsilon)$ and 
$y=z-x$. 
It is not hard to see that 
the condition $y^{-1} \in \OO_{R}$ for a certain subset $R \subset B$ verifying $\# (R \cap B_0) \leq r$ 
means exactly that 
\begin{align}
\label{r:coroolary-s-unit-equation-geometric-first}
x \in   \cup_{R \subset B, \# R \cap B_0 \leq r}  
\{ x' \in \OO_S^*(B_0, \varepsilon) \colon x' \text{ is } (R,\DD_z)\text{-integral in } X   \}. 
\end{align}
We cannot apply directly 
Theorem \ref{t:s-unit-equation-geometric-general} 
since $\tilde{Z}= L(nD_S) \setminus \{0\}$ is not compact. 
However,  
it suffices to restrict ourselves to the case $z \in Z \coloneqq \mathbf{S}^d$ since 
the compact subspace $\mathbf{S}^d$ 
contains all classes modulo $\C^*$ of $\tilde{Z}$. 
 Therefore,   Theorem \ref{t:s-unit-equation-geometric-general} says that  the set 
$$
J_{r, \varepsilon} = \cup_{z \in \mathbf{S}^d}  \cup_{R \subset B, \# R \cap B_0 \leq r}  
\{ x' \in \OO_S^*(B_0, \varepsilon) \colon x' \text{ is } (R,\DD_z)\text{-integral in } X   \}
$$
is finite. 
Combining with \eqref{r:coroolary-s-unit-equation-geometric-first},  
the proof of Corollary \ref{c:s-unit-equation-geometric-general-1} is completed. 
\end{proof}

Following the general idea that parametrized Diophantine equations have 
no or very few  
integral solutions under a general choice of parameters, 
we mention below a remarkable theorem on unit equations in the case of number fields. 

\begin{theorem} 
[Evertse-Gy\" ory-Stewart-Tijdeman]
\label{t:unit-atmost2}
Let $K$ be a number field and $S$ a finite number of places. 
There exists a finite set of triples $A \subset (K^*)^3$ with the following property. 
For every  
$\alpha=(\alpha_1, \alpha_2, \alpha_3) \in (K^*)^3$ whose class $[\alpha] \in (K^*)^3/(K^*(\OO_S^*)^3)$ does not belong to 
$[A] \subset (K^*)^3/(K^*(\OO_S^*)^3)$,  
the $S$-unit equations 
\begin{align}
\label{e:unit}
 \alpha_1 x+\alpha_2y=\alpha_3
 \end{align} 
has at most 2 solutions. 
\end{theorem}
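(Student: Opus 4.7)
My plan is to reduce the problem to known finiteness results for $S$-unit equations in several variables, and then to bookkeep carefully how the resulting strata descend to the parameter space of $\alpha$. First, I would normalize: dividing \eqref{e:unit} by $\alpha_3$, a solution $(x,y)\in (\OO_S^*)^2$ of $\alpha_1 x+\alpha_2 y=\alpha_3$ corresponds to a solution $(u,v)=(\alpha_1 x/\alpha_3,\alpha_2 y/\alpha_3)$ of $u+v=1$ with $u\in\beta_1\OO_S^*$, $v\in\beta_2\OO_S^*$, where $\beta_i=\alpha_i/\alpha_3$. The cardinality of the solution set is invariant under the $K^*(\OO_S^*)^3$-action on $(\alpha_1,\alpha_2,\alpha_3)$, so it depends only on the class $[\alpha]\in (K^*)^3/(K^*(\OO_S^*)^3)$, equivalently on the class of $(\beta_1,\beta_2)$ modulo $(\OO_S^*)^2$.

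Next, suppose $[\alpha]$ admits three distinct solutions $(\xi_i,\eta_i)\in (\OO_S^*)^2$, $i=1,2,3$, of the normalized equation $\beta_1\xi+\beta_2\eta=1$. The $3\times 3$ linear system in the unknown column $(\beta_1,\beta_2,-1)^{\top}$ is overdetermined, so its consistency is equivalent to the vanishing of the Plücker determinant
\[
\xi_1(\eta_2-\eta_3)+\xi_2(\eta_3-\eta_1)+\xi_3(\eta_1-\eta_2)=0,
\]
which expands to a six-term linear relation among the products $\xi_i\eta_j\in\OO_S^*$ ($i\neq j$). Moreover, Cramer's rule applied to any two of the three rows recovers $(\beta_1,\beta_2)$ as an explicit rational function of the $\xi_i,\eta_i$, which will later be the bridge from solutions back to the parameter $[\alpha]$.

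The heart of the argument is then to invoke the Evertse--Schlickewei--Schmidt finiteness theorem for multi-variable $S$-unit equations: the projective solutions $[\xi_1\eta_2:\xi_1\eta_3:\xi_2\eta_1:\xi_2\eta_3:\xi_3\eta_1:\xi_3\eta_2]$ of the above relation, taken modulo the natural diagonal $\OO_S^*$-action, are confined to a finite union of proper coordinate-linear strata. On each stratum, either a partial vanishing forces two of the three solutions to coincide (contradicting distinctness), or the Cramer formulas pin down the class of $(\beta_1,\beta_2)$ modulo $(\OO_S^*)^2$ to a single, explicitly determined element. Collecting one class $[\alpha]$ per non-degenerate stratum assembles the finite exceptional set $A$.

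The step I expect to be the main obstacle is the descent in this last phase: verifying that each non-collapsing stratum contributes only finitely many $(\OO_S^*)^2$-classes of $(\beta_1,\beta_2)$, rather than a positive-dimensional family of them. This amounts to checking that the Cramer expressions for $\beta_1,\beta_2$ are actually $(\OO_S^*)^6$-equivariant on each stratum modulo the descended equivalence, which uses not merely the qualitative finiteness but the explicit coordinate-subspace structure furnished by the subspace theorem. Once this equivariance is in place, the union of the finitely many resulting classes is the desired set $[A]\subset (K^*)^3/(K^*(\OO_S^*)^3)$.
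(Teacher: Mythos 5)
The paper does not prove this theorem; it only cites \cite[Theorem 1]{evertse-gyory-stewart-tijdeman}, so there is no internal proof to compare against and you are in effect reconstructing the original argument. Your overall strategy does match that of Evertse--Gy\"ory--Stewart--Tijdeman: normalize so that the relevant parameter is $(\beta_1,\beta_2)$ modulo $(\OO_S^*)^2$, derive the six-term $S$-unit equation from the vanishing determinant, and feed it into the theory of unit equations in several unknowns. Your observation that the projectivized six-tuple $[\xi_1\eta_2:\xi_1\eta_3:\xi_2\eta_1:\xi_2\eta_3:\xi_3\eta_1:\xi_3\eta_2]$ recovers all ratios $\xi_i/\xi_j$ and $\eta_i/\eta_j$, and hence determines $(\beta_1,\beta_2)$ modulo $(\OO_S^*)^2$, is correct and is exactly the descent mechanism; no further equivariance verification is needed on the nondegenerate part.

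The genuine gap is in the dichotomy you assert for the degenerate strata: \emph{``either a partial vanishing forces two of the three solutions to coincide, or the Cramer formulas pin down the class.''} This is false as stated. Vanishing two-term subsums of the shape $\xi_1\eta_2-\xi_1\eta_3=0$ or $\xi_1\eta_2-\xi_2\eta_1=0$ do force a coincidence among the solutions, but a subsum such as $\xi_1\eta_2+\xi_2\eta_3=0$ imposes only a multiplicative relation among the $\xi_i,\eta_j$ that is perfectly compatible with three distinct solutions, and the corresponding stratum is a positive-dimensional family of six-tuples on which Cramer's rule does not isolate a single class. To close this case one must observe that the complementary subsum $-\xi_1\eta_3-\xi_2\eta_1+\xi_3\eta_1-\xi_3\eta_2=0$ is itself a shorter $S$-unit equation and recurse: apply the finiteness theorem to it, peel off any further vanishing sub-subsums, and continue until every branch either collapses to a coincidence or determines the relevant ratios up to finitely many choices. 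That recursion is the real content of the degenerate analysis and is not carried out in your write-up; by contrast, the step you flag as the main obstacle (equivariance of the Cramer formulas under the $(\OO_S^*)^6$-action) is not actually where the difficulty lies.
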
 

\begin{proof}
See \cite[Theorem 1]{evertse-gyory-stewart-tijdeman}. 
Note that the natural action of $K^*(\OO_S^*)^3$ on $(K^*)^3$ is given by: 
$(c, (u,v,w)) \cdot (\alpha_1, \alpha_2, \alpha_3) = (cu\alpha_1, cv\alpha_2, cw \alpha_3).$
\end{proof}

 Theorem \ref{t:unit-atmost2} implies that almost all equations 
of the form \eqref{e:unit} have no more than $2$ unit solutions. 
As an analogous result for certain Diophantine equations in function fields, 
Corollary \ref{c:s-unit-equation-geometric-general-1} can be directly reformulated 
as follows. 

\begin{corollary}
\label{c:few-solution-generalized-unit}
Given $\varepsilon > 0$, let $\omega \in \OO_S^*(B_0, \varepsilon)$ and $n \geq 1$. 
Consider the equation 
\begin{equation}
\label{e:analogue-few-sol-unit-function-field}
x+y= \omega
\end{equation}
with unknowns $x,y \in K^*$ satisfying $x \in L(nD_S) \setminus \{0\}$ and $y^{-1} \in \OO_{B_0, r}$ 
(cf. \eqref{e:definition-o-b-r}). 
There exists a finite subset $A \subset \OO_S^*(B_0, \varepsilon)$ such that 
whenever  $\omega \notin \C^* A$, 
 the equation \eqref{e:analogue-few-sol-unit-function-field} has no solutions. 
Moreover, there exists $m >0$ such that we can take $A$ having no more than 
$m(r+1)^{2\rank{\pi_1 (B_0)}}$ elements for every $r \in \N$.  
\end{corollary}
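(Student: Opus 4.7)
The plan is to derive this statement directly from Corollary \ref{c:s-unit-equation-geometric-general-1}, keeping careful track of the quantitative bound provided by Theorem \ref{t:s-unit-equation-geometric-general}. The key observation is that the equation $x + y = \omega$ in the present corollary, under the prescribed conditions, is simply a relabeling of equation \eqref{e:unit-equation-geometric-general}, in which the roles of the ``unit variable'' and the ``$L(nD_S)$-variable'' have been swapped.

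Concretely, I would first perform the following change of variables. Given any triple $(x, y, \omega) \in (K^*)^3$ satisfying $x + y = \omega$ together with the hypotheses $x \in L(nD_S) \setminus \{0\}$, $y^{-1} \in \OO_{B_0, r}$, and $\omega \in \OO_S^*(B_0, \varepsilon)$, set $x' := \omega$, $y' := -y$, and $z := x$. Then $x' + y' = z$ holds, with $x' \in \OO_S^*(B_0, \varepsilon)$, $(y')^{-1} = -y^{-1} \in \OO_{B_0, r}$ (the set $\OO_{B_0, r}$ is stable under multiplication by $-1$ because each summand $\OO_R$ in its definition \eqref{e:definition-o-b-r} is a ring), and $z \in L(nD_S) \setminus \{0\}$. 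Hence $(x', y', z)$ is a solution of equation \eqref{e:unit-equation-geometric-general}.

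Consequently, every $\omega$ for which the new equation admits at least one solution is of the form $x' \in \OO_S^*(B_0, \varepsilon)$ for which equation \eqref{e:unit-equation-geometric-general} is solvable. Taking $A \subset \OO_S^*(B_0, \varepsilon)$ to be any set of representatives modulo $\C^*$ of this collection of $x'$'s, I would conclude that whenever $\omega \notin \C^* A$, the new equation has no solutions. The finiteness of $A$ is immediate from Corollary \ref{c:s-unit-equation-geometric-general-1}. For the polynomial bound, I would retrace the proof of Corollary \ref{c:s-unit-equation-geometric-general-1}: the set of admissible $x'$'s is contained in the union $J_{r, \varepsilon}$ of Theorem \ref{t:s-unit-equation-geometric-general} applied to $Z = \mathbf{S}^d \subset L(nD_S)$, so the stronger quantitative estimate $\#A \leq m(r+1)^{2 \rank \pi_1(B_0)}$ is inherited directly from the quantitative conclusion of Theorem \ref{t:s-unit-equation-geometric-general}.

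I do not expect any serious obstacle, as this corollary is essentially a bookkeeping translation of previously established results. The only points requiring care are verifying that $\OO_{B_0, r}$ is stable under negation, checking that the sign change $y \mapsto -y$ preserves the defining conditions on all three variables, and making sure to invoke the quantitative (not merely qualitative) form of Corollary \ref{c:s-unit-equation-geometric-general-1}, which in turn goes back to the polynomial bound in Theorem \ref{t:s-unit-equation-geometric-general}.
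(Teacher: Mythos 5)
Your proposal is correct and matches the paper's intended argument: the paper states that Corollary \ref{c:few-solution-generalized-unit} is a direct reformulation of Corollary \ref{c:s-unit-equation-geometric-general-1}, and your change of variables $(x,y,\omega) \mapsto (x',y',z) = (\omega, -y, x)$ makes this precise, with the sign-stability of $\OO_{B_0,r}$ being the only small detail to verify. The quantitative bound on $\#A$ is, as you note, inherited from Theorem \ref{t:s-unit-equation-geometric-general} applied to $Z=\mathbf{S}^d$, exactly as the paper indicates.
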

Note that the above last statement follows immediately from Theorem  \ref{t:s-unit-equation-geometric-general}.

\subsection{Proof of Theorem  \ref{t:s-unit-equation-geometric-general}}
\label{s:proof-of-ruled-surface}
\subsubsection{Preliminaries} 
 We begin with 
an easy analogue of the Lang-N\' eron theorem 
for the multiplicative group $\G_m$. 
Recall that $S$ is a finite subset of $B$. 

\begin{proposition}
\label{p:integer-ring-finite-rank}
$\OO_S^*/\C^*$ is a torsion-free abelian group of rank $\leq \# S$.  
\end{proposition}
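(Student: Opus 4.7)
The plan is to exhibit an explicit embedding of $\OO_S^*/\C^*$ into the free abelian group $\Z^S$ via the valuation map, from which both torsion-freeness and the rank bound follow immediately.

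First I would define the group homomorphism
\[
\Phi \colon \OO_S^* \longrightarrow \Z^S, \quad x \longmapsto (\val_s(x))_{s \in S}.
\]
Additivity of valuations under multiplication makes $\Phi$ a homomorphism. By the very definition of $\OO_S^*$ (the units of $\OO_S$), any $x \in \OO_S^*$ satisfies $\val_\nu(x) = 0$ for every $\nu \in B \setminus S$, so the full divisor of $x$ on $B$ is determined by its $S$-components.

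Next I would compute $\ker \Phi$. An element $x$ lies in the kernel iff $\val_\nu(x) = 0$ for every $\nu \in B$, i.e., iff $\divisor(x) = 0$ as a divisor on $B$. Such a rational function $x \in K^* = \C(B)^*$ has neither zeros nor poles, so it is a global holomorphic invertible function on the compact connected Riemann surface $B$; by the maximum principle (or equivalently because $H^0(B, \OO_B^*) = \C^*$), we get $x \in \C^*$. Conversely every nonzero constant is clearly in $\ker \Phi$. Hence $\ker \Phi = \C^*$, and $\Phi$ descends to an injection
\[
\bar\Phi \colon \OO_S^*/\C^* \hookrightarrow \Z^S.
\]

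Finally, the conclusion is immediate: $\Z^S$ is a free abelian group of rank $\#S$, so any subgroup is free abelian (hence torsion-free) of rank at most $\#S$. This gives both assertions of the proposition simultaneously. There is no real obstacle here; the only small subtlety is making sure that the kernel computation invokes the compactness and connectedness of $B$ (without compactness one cannot conclude that a rational function with empty divisor is constant), but this is part of the global standing hypothesis on $B$.
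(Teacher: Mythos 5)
Your proposal is correct and takes essentially the same approach as the paper: both define the valuation map $\OO_S^* \to \Z^S$, identify the kernel as $\C^*$ using that a rational function with empty divisor on the compact connected curve $B$ is constant, and conclude from the resulting embedding into a free abelian group of rank $\#S$.
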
 

\begin{proof}
 Consider the following homomorphism of groups 
$$
\rho \colon \OO_S^* \to \oplus_{\nu \in S} \Z , \quad f \mapsto  (\mult_\nu( \divisor f) )_{\nu \in S}.  
$$
We claim that $\Ker \rho = \C^*$. 
Indeed, suppose that $f \in \OO_S^*$ satisfies $\rho (f) =0$. 
Since $f \in \OO_S^*$, all poles and zeros of $f$ belongs to $S$. 
However, $\rho (f)=0$ implies that these poles and zeros are all of order $0$. 
It follows that the corresponding morphism $f \colon B \to \Proj^1$ must be constant 
and thus $f \in \C^*$ as claimed. 
Therefore, we have an injective homomorphism of groups 
$\OO_S^*/ \C^* \to \oplus_{\nu \in S} \Z$.  
 $\OO_S^*/\C^*$ is a torsion-free abelian group of rank $\leq \# S$.  
\end{proof}

Now each $z \in \OO_S^*= \G_m(\OO_S)$ induces a section $\sigma_z$ of the projection $Y \to B_0$ and thus 
a section $i_z$ of the following exact sequence of fundamental groups: 
\begin{equation}
\label{e:exact-sequce-unit-1}
0 \to \pi_1(Y_{b_0}, w_0) \to \pi_1(Y, w_0) \xrightarrow{\eta}  \pi_1(B_0, b_0) \to 0, 
\end{equation}
where we fix $w_0 =1 \in Y_{b_0}= \C^*$ above 
the fixed point $b_0 \in B_0$. 
\par
Fix a collection of geodesics  $l_{w_0,w} \colon [0,1] \to Y_{b_0}$ 
on $Y_{b_0}$ 
such that $l_{w_0,w} (0)=w_0$ and $l_{w_0,w} (1)=w \in Y_{b_0}$. 
Every $x \in \OO_S^*$ induces a section $\sigma_x \colon B_0 \to Y_{B_0}$ which in turn gives rise to 
a section $i_x \colon \pi_1(B_0,b_0) \to \pi_1(Y_{B_0}, w_0)$ of the exact sequence 
\eqref{e:abelian-homotopy-exact-sequence-1} as follows. 
For every loop $\gamma$ of $B_0$ based at $b_0$, we define: 
\begin{equation}
\label{e:definition-of-i-p-main-reduction-step}
i_x([\gamma])=[ l^{-1}_{w_0,\sigma_x(b_0)} \circ \sigma_x(\gamma) \circ l_{w_0,\sigma_x(b_0)}] 
\in  \pi_1(Y_{B_0}, w_0). 
\end{equation} 
 \par 
 Denote $G= \pi_1(B_0, b_0)$ and let 
$\widehat{G}$ be the profinite completion of $G$.  
 As in the case of elliptic surfaces (Proposition \ref{p:homotopy-rational-abelian}), we have the following reduction result.  

\begin{theorem}
\label{t:homotopy-northcott-unit-equation}
Let $n \geq 2$ be an integer. 
We have the following commutative diagram 
of homomorphisms of groups: 
 \[ 
\label{d-trivial-diagram}
\begin{tikzcd}
\OO_S^*/(\OO_S^*)^n  \arrow[r, hook , "\delta"] 
 &  H^1(\widehat{G}, \mu_n) 
   \arrow[dr, "\simeq"]
    & \\
\OO_S^* \arrow[u] \arrow[r, "\alpha"] 
& H^1(G, \Z) \arrow[r, "\beta"] 
& H^1(G, \mu_n). 
\end{tikzcd}
\]
Moreover, two elements of $\OO_S^*$ induces the same section of the exact sequence of 
fundamental groups \eqref{e:exact-sequce-unit-1}
if and only if they differs by a factor in $\C^*$.  
\end{theorem}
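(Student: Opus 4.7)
The plan is to construct the two horizontal rows of the diagram directly and then reduce the injectivity of $\delta$ to a Kummer-theoretic argument; the moreover clause will follow from the triviality of the fibration $Y = \G_m \times B_0$. I would first define $\alpha \colon \OO_S^* \to H^1(G, \Z) = \Hom(G, \Z)$ by the winding-number formula $\alpha(x)([\gamma]) = \frac{1}{2\pi i}\int_\gamma d\log x$, which is integer-valued because any local branch of $\log x$ on $B_0$ is well-defined up to $2\pi i \Z$. The map $\beta$ is the change-of-coefficients map induced by $\Z \twoheadrightarrow \Z/n \xrightarrow{\sim} \mu_n$, $1 \mapsto e^{2\pi i / n}$. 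Since $\alpha(y^n) = n \alpha(y)$, the composite $\beta \circ \alpha$ vanishes on $(\OO_S^*)^n$ and therefore descends to a homomorphism $\delta \colon \OO_S^*/(\OO_S^*)^n \to H^1(G, \mu_n)$; the isomorphism $H^1(G, \mu_n) \simeq H^1(\widehat G, \mu_n)$ is automatic since $\mu_n$ is finite, and commutativity of the diagram is built into the construction.

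The core of the proof is the injectivity of $\delta$. Assume $\delta([x]) = 0$, i.e.\ $\alpha(x) = n\psi$ for some $\psi \in \Hom(G, \Z)$. I would first evaluate $\alpha(x)$ on the natural loop generators of $G$: loops around each disc of $V$ contribute $0$ because $x$ is holomorphic and non-vanishing there, whereas a loop around the disc of $U$ enclosing $s \in S$ contributes $\ord_s(x)$; in particular $\ord_s(x) \in n\Z$ for every $s \in S$. The $n$-divisibility of $\alpha(x)$ kills the monodromy of the Kummer cover $\{y^n = x\}$ over $B_0$, so a standard construction on the universal cover yields $y \in \OO(B_0)^*$ with $y^n = x$; because $\alpha(x)$ also vanishes on loops around $V$, this extends to $y \in \OO(B \setminus S)^*$. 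Finally, at each $s \in S$ one writes $x = u(t)\, t^{n a_s}$ in a local coordinate $t$ with $u(0) \neq 0$; since $\C$ is algebraically closed, $u$ admits a local holomorphic $n$-th root, so $y$ extends meromorphically across $s$. Hence $y \in K^*$ is a rational function with zeros and poles only on $S$, i.e.\ $y \in \OO_S^*$ with $x = y^n \in (\OO_S^*)^n$, proving injectivity.

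For the moreover clause, I would exploit that $Y = \G_m \times B_0$ is literally a product, so the sequence \eqref{e:exact-sequce-unit-1} is the trivial central extension $0 \to \Z \to \Z \times G \to G \to 0$; its sections are parametrized by $\Hom(G, \Z)$ via $\phi \mapsto (\gamma \mapsto (\phi(\gamma), \gamma))$. A direct computation using the definition \eqref{e:definition-of-i-p-main-reduction-step} shows that the contributions of the geodesic $l_{w_0, x(b_0)}$ and its inverse cancel inside the $\G_m$-factor of the conjugated loop, so the $\G_m$-winding of $i_x([\gamma])$ is exactly $\alpha(x)([\gamma])$; thus $i_x$ corresponds to $\alpha(x)$. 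Consequently $i_x = i_z$ if and only if $\alpha(x/z) = 0$, and evaluating $\alpha$ on loops around each $s \in S$ shows that $\ker\alpha$ consists of functions with empty divisor, that is constants in $\C^*$; this is essentially Proposition \ref{p:integer-ring-finite-rank}. The main technical obstacle is the meromorphic extension of $y$ across the points of $S$ in the injectivity argument; once this local extraction is in place, the rest of the proof is formal.
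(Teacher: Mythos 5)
Your proof is correct and takes a genuinely different, more elementary route than the paper. The paper works sheaf-theoretically: it builds the commutative diagram from the relative exponential sequence $0 \to (R^1 f_* \Z)^\vee \to T_Y \to Y \to 0$, the $n$-th power map, and a snake-lemma diagram, then translates $H^1(B_0, -)$ into group cohomology of $G$ via the $K(\pi,1)$ property and invokes Serre's comparison $H^1(\widehat{G}, \Gamma) \simeq H^1(G, \Gamma)$. Your version replaces all of this with explicit analysis: $\alpha$ as the $d\log$-winding number, $\beta$ as reduction mod $n$ of coefficients, and — crucially — a hands-on Kummer argument showing that an $S$-unit whose winding class is divisible by $n$ actually is an $n$-th power in $\OO_S^*$ (extracting the root on $B_0$ via triviality of the monodromy, extending across $V$ by removable singularities, and extending meromorphically across $S$ using the local form $x = u(t)\,t^{na_s}$). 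This buys something: the injectivity of the first horizontal arrow $\OO_S^*/(\OO_S^*)^n \hookrightarrow Y(B_0)/Y(B_0)^n$ in the paper's diagram \eqref{d-unit-family-diagram-1} is exactly the nontrivial fact $\OO_S^* \cap (\OO(B_0)^*)^n = (\OO_S^*)^n$, which the paper defers to the author's thesis, whereas you supply the argument. For the \emph{moreover} clause, you identify sections of the trivial extension $0 \to \Z \to \Z \times G \to G \to 0$ with $\Hom(G,\Z)$ and verify directly that $i_x$ corresponds to $\alpha(x)$ (the geodesic contributions cancel under conjugation), then read $\ker\alpha = \C^*$ off the local winding numbers at $S$; this is the same content as the paper's appeal to Proposition \ref{p:integer-ring-finite-rank} together with $\alpha(x) = i_x - i_1$, but again made explicit. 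Both approaches are valid; yours is more self-contained and concrete, the paper's is more structural and generalizes more readily to higher-dimensional fibres.
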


The proof of Theorem \ref{t:homotopy-northcott-unit-equation} 
will be given in Appendix \ref{appendix-unit-diagram}. 

\subsection{Key lemma} 

Recall that $d_M$ means the Kobayashi hyperbolic pseudo-metric
on the complex space $M$ 
and $\rho$ is a fixed Hermitian metric on the smooth surface $X$. 
As in the case of elliptic fibrations, 
the main additional ingredient in the proof of Theorem \ref{t:s-unit-equation-geometric-general} for ruled surfaces is  the following 
analogous technical lemma of Lemma \ref{l:key-lemma-strong-uniform-elliptic}:

\begin{lemma} 
\label{l:key-lemma-strong-unit-geometric-general} 
 Let $\varepsilon > 0$. 
Then there exists $M > 0$ such that for each $z_i \in \tilde{Z}$, 
we have the following data:
\begin{enumerate} [\rm (a)]
\item
an analytic open neighborhood $U_i$ of $z_i$ in $\tilde{Z}$;
\item
a disjoint union $V_i$ consisting of $\leq M$ discs each of $\rho$-radius $\leq \varepsilon$ in $B_0$; 
\item
a constant $c_i > 0$;
\end{enumerate}
with the following property: 
\begin{enumerate} [\rm (Q)]
\item 
for each $z \in U_i$, we have 
$
d_{(Y \setminus \DD_z) \vert_{(B_0 \setminus V_i)}} \geq c_i \rho\vert_{(Y \setminus \DD_z) \vert_{(B_0 \setminus V_i)}}. 
$
\end{enumerate}

\end{lemma}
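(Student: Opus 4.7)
The plan is to adapt the proof of Lemma \ref{l:key-lemma-strong-uniform-elliptic} to the ruled surface $X = \Proj^1 \times B$, with two substantive modifications: the bad set $T_z$ must be enlarged to account for $(0) \cup (\infty)$, and the translation step uses the multiplicative law on $\G_m$ rather than the elliptic group law.

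First I would establish the uniform bound $M$. Assuming $\tilde Z$ integral so that the divisors $\DD_z$ are numerically equivalent in $X$, intersecting with a fixed ample divisor yields a uniform bound $N$ on the total multiplicity of irreducible components of $\DD_z$. The adjunction formula gives $p_a(\DD_z) = (\DD_z \cdot (\DD_z + K_X))/2 + 1 = p_1$ independent of $z$, and Riemann--Hurwitz applied to $\pi_z \colon (\DD_z)_{\mathrm{red}} \to B$ bounds its ramification and singular points by $2(p_1 - 2) + 2N$. In contrast to the elliptic case, one must additionally include the base images of the points where non-$(0), (\infty)$ components of $\DD_z$ meet $(0) \cup (\infty)$; this finite set has size at most $\DD_z \cdot ((0) + (\infty))$, which is again fixed by numerical equivalence. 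Let $T_z \subset B$ denote the union of all these points; its cardinality is bounded by a constant $M$ independent of $z$, and for (b) I choose $V_i$ to be a disjoint union of $\leq M$ closed discs of $\rho$-radius $\leq \varepsilon$ covering $T_{z_i} \cap B_0$.

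For the contradiction step, suppose (Q) fails. Then there exist $z_n \to z_i$ in $\tilde Z$ and holomorphic maps $h_n \colon \Delta_{R_n} \to (Y \setminus \DD_{z_n})|_{B_0 \setminus V_i}$ with $R_n \to \infty$ and $|dh_n(0)|_\rho > 1$. Brody's reparametrization lemma (Theorem \ref{l:brody-reparametrization}) produces $g_n$ with $|dg_n(0)|_\rho = 1$ and derivatives uniformly bounded on relatively compact subsets of $\C$. After extracting a subsequence via Arzela--Ascoli and a Morera argument, $g_n \to g$ uniformly on compact subsets of $\C$, where $g \colon \C \to X$ is holomorphic with $|dg(0)|_\rho = 1$. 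The composition $\pi \circ g$ lands in $\overline{B_0 \setminus V_i}$, which is Brody hyperbolic, hence constant at some $b_* \in \overline{B_0 \setminus V_i}$. Since $V_i$ covers $T_{z_i} \cap B_0$ by construction, the fiber of $\DD_{\mathrm{red}}$ over $(b_*, z_i)$ is reduced and finite, so Lemma \ref{l:analytic-cover-deformation} yields a polydisc neighborhood $\Delta \times U_i$ of $(b_*, z_i)$ over which $\DD$ decomposes as $\lambda$ disjoint \'etale sheets. I would select a sheet $\DD^0$ corresponding to a non-$(0), (\infty)$ branch of $\DD_{z_i}$ --- which exists by the hypothesis $\DD_{z_i} \not\subset (0) \cup (\infty)$ and because $b_* \notin T_{z_i}$ --- obtaining a smooth family of local sections $s_z \colon \Delta \to \G_m \subset \Proj^1$.

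The key modification is to use multiplicative translation in $\G_m$: define biholomorphisms $\Psi_z \colon X_\Delta \to X_\Delta$ commuting with $f$ by $\Psi_z(x, b) = (s_{z_i}(b) \cdot s_z(b)^{-1} \cdot x,\, b)$, which preserve both $(0)$ and $(\infty)$ and send the sheet $s_z$ of $\DD_z$ onto the fixed sheet $s_{z_i}$. Then $f_n \coloneqq \Psi_{z_n} \circ g_n$ maps into the fixed space $Y_\Delta \setminus s_{z_i}$, and $|df_n(0)|_\rho$ is sandwiched between two positive constants independent of $n$ by the smooth dependence of $\Psi_z$ on $z$ together with compactness of the $\rho$-unit tangent bundle of $X$. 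The step I expect to be the main obstacle is verifying that $Y_\Delta \setminus s_{z_i}$ is hyperbolically embedded in $X_{\bar\Delta}$ via Green's theorem (Theorem \ref{t:green-hyperbolic-embedding}): this requires checking Brody hyperbolicity both of the boundary divisor $(0) \cup (\infty) \cup s_{z_i}|_{\bar\Delta}$ (a disjoint union of three sections of $\pi$, each isomorphic to $\bar\Delta$) and of its complement $X_{\bar\Delta} \setminus ((0) \cup (\infty) \cup s_{z_i}|_{\bar\Delta})$, whose fibers are $\Proj^1 \setminus \{0, \infty, s_{z_i}(b)\} \cong \C \setminus \{0, 1\}$ and whose base $\bar\Delta$ is Brody hyperbolic. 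Once Green's theorem is in place, the standard contradiction via the infinitesimal Kobayashi--Royden metric --- $\lambda(f_n(0), df_n(0)) \leq 2/R_n \to 0$ while $\lambda(f_n(0), df_n(0)) \geq c_0 |df_n(0)|_\rho > 0$ --- closes the argument.
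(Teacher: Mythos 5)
Your proposal is correct and follows essentially the same route as the paper: the uniform bound on $T_z$ via intersection numbers, adjunction, and Riemann--Hurwitz (augmented by $\DD_z\cdot((0)+(\infty))$); Brody reparametrization plus Arzel\`a--Ascoli to produce a limit entire curve in a fibre; the local \'etale-cover decomposition of $\DD$ near $(b_*,z_i)$ via Lemma \ref{l:analytic-cover-deformation}; the fibrewise multiplicative translation $\Psi_z$ normalizing the chosen sheet onto the fixed $z_i$-sheet; and the Green's-theorem contradiction over fibres $\Proj^1\setminus\{0,\infty,s_{z_i}(b)\}$. Two small remarks: your formula $\Psi_z(x,b)=(s_{z_i}(b)\,s_z(b)^{-1}x,\,b)$ has the correct direction (it carries $D_z^0$ onto $D_{z_i}^0$), whereas the paper's displayed multiplication by $s_z s_{z_i}^{-1}$ is its inverse and appears to be a slip of sign; and you should make explicit, as the paper does by applying the cover lemma to $\tilde\DD=\DD\cup\bigl(((0)\cup(\infty))\times\tilde Z\bigr)$, that after shrinking $U_i$ and $\Delta$ the chosen sheet stays in $\G_m\times\Delta$ for \emph{all} $z\in U_i$, not only for $z=z_i$.
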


The proof of Lemma \ref{l:key-lemma-strong-unit-geometric-general} applies, \emph{mutatis mutandis}, 
the proof of  Lemma \ref{l:key-lemma-strong-uniform-elliptic}  with some minor modifications. 
The same remark at the beginning of the proof of Lemma \ref{l:key-lemma-strong-uniform-elliptic} 
shows that there exists $N' > 0$ such that the total
number of irreducible components (counted with multiplicities) of each effective divisor $\DD_z$ 
is at most $N'$ for every $z \in  Z$. 
\par
Moreover, since $\DD \to B \times \tilde{Z}$ is flat, every divisor 
$\DD_z$, $z \in \tilde{Z}$, 
contains no vertical components with respect to the projection $f \colon X \to B$. 
The divisors $\DD_z$ are   numerically equivalent and are not contained in the curve $(0)\cup (\infty) \subset X$. 
In particular, it follows that  for some constant $N'' > 0$, we have 
$$
\# \DD \cap ((0) \cup (\infty)) \leq N'', \quad \text{ for all }z \in Z.
$$ 

By the adjunction formula,  
$$
p_1 \coloneqq p_a(\DD_z) = \frac{\DD_z(\DD_z+K_X)}{2} +1 \geq 0
$$
is a constant independent of $z \in \tilde{Z}$. 
Let $T'_z \subset B$ be the image in $B$ of the union of the ramification points of the ramified cover
of algebraic curves $\pi_z \colon (\DD_z)_{red} \to  B$, 
and of the singular points of $(\DD_z)_{red}$. 
We have as in the relation \eqref{e:key-lemma-strong-uniform-elliptic-1} that:    
$$
\# T'_z  \leq M' \coloneqq 2N' + 2(p_1 -2), \quad \text{ for all } z \in \tilde{Z}. 
$$
Define $T_z = T'_z  \cup f( \DD \cap ((0) \cup (\infty)) ) \subset B$ then it follows from the above discussion that 
\begin{equation}
\label{e:key-lemma-unit-equation-geometric-general} 
\#T_z  \leq M \coloneqq M' + N'', \quad \text{ for all } z \in \tilde{Z}. 
\end{equation}

We return to the proof of Lemma \ref{l:key-lemma-strong-unit-geometric-general}. 
Again, the idea of the proof is the same as in Lemma \ref{l:key-lemma-strong-uniform-elliptic} 
 but we indicate in details the needed modifications. 

\begin{proof}[Proof of Lemma \ref{l:key-lemma-strong-unit-geometric-general}] 
Fix $\varepsilon >0$   and $z_i \in \tilde{Z}$. 
Recall that $B_0 = B \setminus (U \cup V)$.  
Let $M$ be the constant defined in \eqref{e:key-lemma-unit-equation-geometric-general}. 
We can clearly choose a finite disjoint union $V_i$ of at most $M$  
nonempty closed discs in $B_0$ of $\rho$-radius $\leq \varepsilon$ 
to cover the points of $T_{z_i} \setminus  V$. 
 Thus, we obtain the data $V_i$ for Lemma \ref{l:key-lemma-strong-unit-geometric-general}.(b). 
\par
Define $\tilde{\DD}= \DD \cup ( ( (0) \cup (\infty)) \times \tilde{Z})  \subset X\times \tilde{Z}$ 
and for each $z\in \tilde{Z}$, let 
$$
Y_z \coloneqq (Y \setminus \DD_z)\vert_{B_0 \setminus V_i} 
= (X \setminus \tilde{\DD}_z)\vert_{B_0 \setminus V_i} \subset X. 
$$ 
To show the existence of  $U_i$, $c_i$ satisfying (i), we suppose the contrary. 
By the continuity of $\rho$ on the compact unit
tangent space of $X$, there would exist a sequence $(z_n)_{n \geq 1} \subset \tilde{Z}$ 
such that $z_n \to z_i$ in the
analytic topology and a sequence of holomorphic maps
$$
h_n \colon  \Delta_{R_n} \to  Y_{z_n}
$$
such that $R_n \to \infty$ 
and $|dh_n(0)|_\rho > 1$. 
Here, $\Delta_{R} \subset \C$ denotes the open disc of radius $R$ in the complex plane.  
By Brody's reparametrization lemma (Theorem \ref{l:brody-reparametrization}), we obtain a sequence of holomorphic maps
$$
g_n \colon \Delta_{R_n} \to Y_{z_n} 
$$
such that $|dg_n(0)|_\rho = 1$ and $|dg_n(z)|_\rho \leq 
 R^2_n/ (R_n^2 - |z|^2)$ 
for all $z \in \Delta_{R_n}$. 
In particular, $|dg_n(z)|_\rho \leq  4/3$ for all $z \in \Delta_{R_n/2}$. 
It follows that the family $(g_n)_{n\geq 1}$ is equicontinuous
with image inside the compact space $X\vert_{B_0 \setminus V_i}$. 
Up to passing to a subsequence,  
$(g_n)_{n \geq 1}$ converges uniformly on compact subsets of $\C$ to
a holomorphic map $g \colon \C \to  X\vert_{B_0 \setminus V_i}$
with $|dg(0)|_{\rho} = 1$. 
 \par
Since $B_0 \setminus  V_i$ is  hyperbolic,  
$f \circ g \colon \C \to  B_0 \setminus  V_i$ is a constant $b_* \in B_0 \setminus  V_i$. 
As in Lemma \ref{l:key-lemma-strong-uniform-elliptic}, 
we can apply Lemma \ref{l:analytic-cover-deformation} to find a constant $\lambda \in \N$, 
a small analytic open disc $\Delta \subset B_0 \setminus V_i$ containing $b_*$  
and a small analytic connected open neighborhood $U_i$ of $z_i$ in $\tilde{Z}$ 
with: 
$$
\tilde{\DD}\vert_{\Delta \times U_i} \to \Delta \times U_i
$$ 
an \' etale cover of degree $\lambda$. 
Shrinking $\Delta$ and $U_i$ if necessary, 
$\tilde{\DD}\vert_{\Delta \times U_i}$ consists of $\lambda$ disjoint connected components including 
$(0)\vert_\Delta \times U_i$ and 
$(0)\vert_\Delta \times U_i$. 
\par
Fixing a connected component $\DD^0$ of $\DD \vert_{\Delta \times U_i}$, 
we obtain a family of $1$-sheeted covers $D_{z}^0 \to \Delta$ with $z \in U_i$ satisfying 
$D_z^0 \cap ((0) \cup (\infty))= \varnothing$.   
Hence, each $D_{z}^0$ for $z \in U_i$ is a complex submanifold of $Y_\Delta$ via the inclusion $\iota_z \colon D_{z}^0 \to Y_\Delta$. 
The composition $f \circ \iota_z \colon D_{z}^0 \to \Delta$ is thus holomorphic and bijective for every $z \in U_i$.  
As injective holomorphic maps are biholomorphic to their images (\cite[Theorem 2.14, Chapter I]{range-86-book}), 
the map $s_{z} \colon \Delta \to D_{z}^0 \to Y_\Delta$ given by $t \mapsto D_{z}^0(t)= f^{-1}(t) \cap D_{z}^0$ 
is holomorphic for every $z \in U_i$. 
Moreover, the map 
\begin{align*}
\Delta \times U_i \to Y_\Delta \times U_i, \quad 
(t, z) \mapsto (D_{z}^0(t) ,  z)= (f^{-1}(t) \cap D_{z}^0 , z) 
\end{align*}
is holomorphic. 
As $Y_\Delta=F \times \Delta$, we can   write 
$s_z(t)=(u(t, z), t)$ where $u \colon \Delta \times U_i \to \C^*$ is a holomorphic function. 
 The maps $\Psi_{z} \colon Y_\Delta \to Y_\Delta$, $z \in U_i$, 
given by the fibrewise multiplication by $s_{z} s_{z_i}^{-1}$, 
i.e., 
$$
\Psi_z(x, t)= (x u(t,z) u(t, z_i)^{-1}, t) , \quad (x, t) \in Y_\Delta = \C^* \times \Delta, 
$$ 
  form a smooth family of biholomorphisms commuting with the projection $ Y_\Delta \to \Delta$. 
 \par
By passing to a subsequence with suitable restrictions of domains of definitions, we can assume 
that $\im(g_n) \subset X_\Delta$ 
and that the holomorphic maps $g_n \colon \Delta_{R_n} \to X_\Delta \setminus \tilde{\DD}_{z_n}$ still satisfy 
$$
R_n \to \infty , \quad |dg_n(0)|_\rho = 1. 
$$
Since $D_{z}^0 \subset \DD_z$, we can consider the sequence of holomorphic maps
$$
f_n \coloneqq \psi_{z_n} \circ g_n \colon \Delta_{R_n} 
\to  Y_\Delta\setminus D_{z_i}^0= X_\Delta \setminus (D_{z_i}^0 \cup (0) \cup (\infty))
$$
into the fixed space $ Y_\Delta\setminus D_{z_i}^0$. 
By the smoothness of the family of biholomorphisms
$(\psi_z)_{z \in U_i}$ 
 and the compactness of the $\rho$-unit tangent bundle of $X$, 
there exists   $c > 1$ such that 
\begin{equation}
\label{e:contradiction-unit}
c^{-1}\leq |df_n(0)|_\rho \leq c, \quad \text{ for all } n \geq 1. 
\end{equation}
\par
Now, consider any holomorphic map $h \colon \C \to Y_{\Delta} \setminus D_{z_i}^0$. 
The composition $f \circ h$ must be a constant since $\Delta$ is hyperbolic. 
Thus, $h$ factors through a fibre of $X_{\Delta} \setminus (D_{z_i}^0 \cup (0) \cup (\infty))$. 
However, as each such fibre is the complement of at least 3 points in $\Proj^1$ and thus is hyperbolic, 
$h$ must be constant. 
Similarly, it is clear that each holomorphic map $\C \to (D_z^0 \cup (0) \cup (\infty))\vert_{\bar{\Delta}}$ is constant. 
 Green's theorem \ref{t:green-hyperbolic-embedding} implies that  
$Y_{\Delta} \setminus \DD_{z_i}$ is hyperbolically
embedded in $X_{\Delta}=f^{-1}(\Delta)$ 
(with respect to the metric $\rho\vert_{X_\Delta}$).  
But since $R_n \to \infty$, we clearly obtain a contradiction using  
\eqref{e:contradiction-unit} and  
Remark \ref{r:infinitesimal-hyperbolic}.  
We have therefore proved the existence of the data in (a), (b) and (c) such that (Q) is satisfied. 
\end{proof}

\subsection{Proof of Theorem \ref{t:s-unit-equation-geometric-general}}

We can now return to the main result which 
will be very similar to the proof of Theorem \ref{t:strong-uniform-elliptic}. 
Let $(\alpha_1, \dots, \alpha_k)$ be a fixed system of generators 
of the fundamental group $\pi_1(B_S, b_0)$ 
with a fixed based point $b_0 \in B_0$. Let $w_0=1 \in Y_b$.  

\begin{proof} [Proof of Theorem \ref{t:s-unit-equation-geometric-general}] 

Fix $\varepsilon > 0$. 
We can enlarge slightly the discs in $V$ 
if necessary. 
Then we obtain a constants $M > 0$ 
and a set of data $(U_i,  V_i,  c_i)$ 
for each element $z_i \in Z$ as in Lemma \ref{l:key-lemma-strong-unit-geometric-general}. 
Consider the open covering $Z \subset \cup_{z_i \in Z} U_i$. 
Since $Z$ is compact, there exists 
$Z_* \subset Z$ finite such that $Z \subset \cup_{z_i \in Z_*} U_i$. 
As $c_i > 0$ for every $z_i \in Z$ by Lemma \ref{l:key-lemma-strong-unit-geometric-general}, 
we have  
\begin{equation}
\label{e:c-star-unit-strong-proof-main}
c_* \coloneqq \min_{z_i \in Z_*} c_i > 0. 
\end{equation}
\par
For each $z_i \in Z_*$,  denote by $L_i>0$ the maximum of 
the constants given by Theorem \ref{t:linear-bound-s-base-curve-1} applied to  
$B_i \coloneqq B \setminus (V \cup V_i)= B_0 \setminus V_i$ and to each free homotopy classes  
$\alpha_1, \dots, \alpha_k$ regarded as elements of $\pi_1(B_i) \supset \pi_1(B_0)$. 
Let 
\begin{equation}
\label{e:l-max-unit-strong-proof-main}
L= \max_{z_i \in \Z_*} L_i >0. 
\end{equation}
 \par
Now let $x \in J_{r, \varepsilon}$, that is, $x \in \OO_S^*(B_0, \varepsilon)$ 
is $(R, \DD_z)$-integral in $X$ for some $z \in Z$,  $R \subset B$ 
such that $\# R \cap B_0 \leq r$. 
As $Z  \subset \cup_{z_i \in Z_*} U_i$, there exists $z_i \in Z_*$ 
such that $z \in U_i$.

By Theorem \ref{t:linear-bound-s-base-curve-1} applied 
to $B_i $, 
there exists $b_i \in B_i$ and a system of loops 
$\gamma_1, \dots,  \gamma_k$ based at $b$
 representing respectively the homotopy classes $\alpha_1, \dots, \alpha_k$ 
 up to a single conjugation 
such that  $\gamma_j \subset B_i \setminus R$ for every $j=1, \dots, k$ and that 
\begin{equation}
\label{e:strong-uniform-unit-proof-main} 
\length_{d_{B_i \setminus R}} (\gamma_j) 
\leq L_i( \# R \cap B_i + 1) \leq L_i( r + 1).
\end{equation} 
The second inequality follows from $ \# R \cap B_i \leq  \# R \cap B_0 \leq r$. 
\par
Let $\sigma_x \colon B \to X$ 
be the  section  induced by $x$. 
For every $j \in \{1, \dots, k\}$, we find that 
 $\sigma_x(\gamma_j) \subset 
 (Y \setminus \DD_z)\vert_{B_i \setminus R} \subset (Y \setminus \DD_z)\vert_{B_i}$ 
 by the definition of $(R, \DD)$-integral points and because $x \in \OO_S^*$. 
 It follows that:
\begin{align*}
\length_\rho (\sigma_x(\gamma_j)) 
& \leq c_i^{-1} \length_{d_{(Y \setminus \DD_z)\vert_{B_0 \setminus V_i}}} 
(\sigma_x (\gamma_j)) 
&  \text{(by Lemma }\ref{l:key-lemma-strong-unit-geometric-general}) 
\\
& 
 = c_i^{-1} \length_{d_{(Y \setminus \DD_z)\vert_{B_i}}} 
(\sigma_x (\gamma_j)) 
& (\text{as } B_i\coloneqq   B_0 \setminus V_i \subset B_0) 
\\
& \leq 
 c_i^{-1} \length_{d_{(Y \setminus \DD_z)\vert_{B_i \setminus R}}} 
(\sigma_x (\gamma_j)) 
& \text{(as } ( Y \setminus \DD_z)\vert_{B_i \setminus R} \subset (Y \setminus \DD_z)\vert_{B_i})  
\\
& \leq 
c_*^{-1} \length_{d_{B_i\setminus R}} (\gamma_j) 
& (\text{by } \eqref{e:c-star-unit-strong-proof-main} \text{ and Lemma } \ref{l:section-geodesic})
\\
& \leq  c_*^{-1}L( r +1).  
&   (\text{by }  \eqref{e:l-max-unit-strong-proof-main} \text{ and } \eqref{e:strong-uniform-unit-proof-main}) 
\end{align*}
Let $\delta$ be the $\rho$-diameter of $X $. 
Then the
homotopy section $i_x$ 
associated to $x$ of the short exact sequence (cf. the sequence \eqref{e:exact-sequce-unit-1} in Theorem \ref{t:homotopy-northcott-unit-equation})  
\begin{equation*}
0 \to  \pi_1(Y_{b_0}, w_0) \to \pi_1(Y_{B_0}, w_0) \to \pi_1(B_0, b_0)\to 0
\end{equation*}
sends the basis $(\alpha_j)_{1 \leq j \leq k}$ of $\pi_1(B_0, b_0)$ 
to the classes in $\pi_1(Y_{B_0}, w_0)$ 
which admit representative loops of $\rho$-lengths bounded by 
$H(r) \coloneqq c_*^{-1}L(r+1) +  2\delta$. 
\par
 As of Theorem \ref{t:strong-uniform-elliptic}, the rest of the proof follows tautologically the same lines 
 of the proof of \cite[Theorem A]{phung-19-abelian}. 
We can thus obtain a finite number $m >0 $ such that: 
\begin{equation*}
\# (J_{r , \varepsilon} \text{ mod } \C^*) \leq m(r+1)^{2 \rank \pi_1(B_0)}, \quad \text{for every } r \in \N. 
\end{equation*} 

The only needed modification is the following. 
We can clearly assume  that $0 < \varepsilon <1$. 
Consider the compact bordered manifold 
$E_\varepsilon  \coloneqq \{ z \in \C \colon \varepsilon \leq |z| \leq  \varepsilon^{-1} \}$.  
Remark that $\pi_1(E_\varepsilon, w_0)= \Z$ (recall that $w_0=1 \in Y_{b_0}= \C^*$).  
\par
Since $x \in \OO_S^*(B_0, \varepsilon)$,
it actually induces a homotopy section $i_{x , \varepsilon}$ 
of the short exact sequence   
\begin{equation*}
0 \to  \pi_1(E_\varepsilon, w_0) \to \pi_1(E_\varepsilon \times B_0, w_0) \to \pi_1(B_0, b_0)\to 0 . 
\end{equation*}
Since $\pi_1(E_\varepsilon \times B_0, w_0)= \pi_1(E_\varepsilon, w_0)  \times \pi_1(B_0, b_0)$, 
the homotopy section 
$i_{x, \varepsilon}$ is determined by the $\pi_1(E_\varepsilon, w_0)$-component 
of $i_{x, \varepsilon} (\alpha_j)$ for every $j= 1, \dots, k$. 
But we have shown above that the induced $\rho$-length of certain representative loops of these components 
are bounded by $H(r) \coloneqq c_*^{-1}L(r+1) +  2\delta$. 
The representative loops are in fact $ \mathrm{pr}_1 (\sigma_x(\gamma_j))$ 
where $\mathrm{pr}_1 \colon E_\varepsilon \times B_0 \to E_\varepsilon$ is the first projection. 
\par
As $E_{\varepsilon}$ is compact and $\pi_1(E_\varepsilon, w_0)= \Z$, we can thus conclude 
as in the proof of \cite[Theorem A]{phung-19-abelian} by applying \cite[Lemma 13.10]{phung-19-abelian}. 
The proof is thus completed. 
\end{proof}

\section{Appendix: Proof of Theorem \ref{t:homotopy-northcott-unit-equation}}
\label{appendix-unit-diagram}
 
Recall that $Y= \C^* \times B_0$. 
 We regard  
$f \colon Y \to B_0$ as a constant sheaf of (multiplicative) 
abelian groups 
over $B_0$. 
There is a canonical short exact sequence of sheaves over $B_0$ 
induced by the exponential map: 
\begin{equation}
\label{e:exp-relative} 
0 \to (R^1f_* \Z )^\vee \to T_Y \to Y \to 0. 
\end{equation}
 
Consider the $n$-th power $B_0$-morphism 
$\wedge n \colon Y \to Y$. 
The induced differential map 
$d(\wedge n) \colon   T_Y \to T_Y$ 
is an isomorphism since we are in characteristic $0$ so that $n$ is invertible. 
\par
The group of global algebraic sections $Y(B_0)$ is naturally identified 
with a subgroup of $Y_K(K)=K^*$. 
Clearly, $Y[n]\coloneqq \Ker( \wedge n)$ is the constant sheaf $\mu_n$ on $B_0$. 
\par
The map $\wedge n  $ and the sequence \eqref{e:exp-relative} 
induce a commutative digram: 

\begin{equation}
\label{d-abelian-family-diagram-general}
\begin{tikzcd}
 &        
&  
&   0 \arrow[d] 
& \\
 & 0 \arrow[r]  \arrow[d] 
 & 0 \arrow[d] \arrow[r] & Y[n]   \arrow[d]  &   \\ 
0 \arrow[r] &  (R^1f_* \Z )^\vee  \arrow[d, "n"] \arrow[r] 
 & T_Y \arrow[r] \arrow[d, " d( \wedge n) "]  
 & Y \arrow[d, "\wedge n"]  \arrow[r] &  0  \\ 
0 \arrow[r] &  (R^1f_* \Z )^\vee  \arrow[d] \arrow[r] 
 & T_Y \arrow[r] \arrow[d,]  
 & Y \arrow[r] \arrow[d]  &  0 \\ 
  & Q \arrow[d]       \arrow[r] 
& 0 \arrow[r] 
& 0  &  
\\
& 0 & & &
\end{tikzcd}
\end{equation}

We have a natural isomorphism 
$Y[n] \simeq Q$ by the snake lemma. 
The cohomology long exact sequences induced by 
Diagram \eqref{d-abelian-family-diagram-general} give a natural commutative diagram:  

 \begin{equation}
\label{d-abelian-family-diagram-general-2}
\begin{tikzcd}     
 Y(B_0)   \arrow[d, "n"] \arrow[r] 
 & H^1(B_0, (R^1f_* \Z )^\vee)  \arrow[r] \arrow[d, "n "]  
 & H^1(B_0, T_Y)  \arrow[d, "\simeq"]  \\ 
 Y(B_0)    \arrow[d ] \arrow[r ] 
 & H^1(B_0, (R^1f_* \Z )^\vee)  \arrow[r] \arrow[d]  
 & H^1(B_0, T_Y) \arrow[d]  \\ 
  H^1(B_0,  Y[n])      \arrow[r, "\simeq"] 
& H^1(B_0, Q) \arrow[r] 
& 0    
\end{tikzcd}
\end{equation}

We decompose the map $Y(B_0) \to  H^1(B_0,  Y[n])$ in the first column as: 
$$
Y(B_0) \to Y(B_0) / Y(B_0)^n \hookrightarrow H^1(B_0,  Y[n]). 
$$
Hence, we obtain a natural commutative diagram from \eqref{d-abelian-family-diagram-general-2}: 

 \begin{equation}
\label{d-unit-family-diagram-1}
\begin{tikzcd}
\OO_S^*/(\OO_S^*)^n  \arrow[r, hook]  
&Y(B_0)/ Y(B_0)^n 
 \arrow[r, hook ]   
  &  H^1(B_0, \mu_n)  \arrow[dr, "\simeq"] &  \\ 
\OO_S^*  
\arrow[u]  \arrow[r, hook] 
& Y(B_0) 
\arrow[u] \arrow[r ] 
& H^1(B_0, (R^1f_* \Z )^\vee) \arrow[r ] 
& H^1(B_0, Q). 
\end{tikzcd}
\end{equation}

Since $Y$ and $B_0$ are $K(\pi,1)$-spaces, 
 the cohomology of local systems coincide with the group cohomology. 
 We thus have canonical isomorphisms: 
$$
 H^1(B_0, \mu_n) \simeq H^1(G, \mu_n), \quad H^1(B_0,(R^1f_* \Z )^\vee)  \simeq H^1( G, \Gamma), 
$$ 
where $G= \pi_1(B_0, b_0)$ and $\Gamma = H_1(Y_{b_0}, \Z) \simeq (R^1f_* \Z )^\vee_{b_0}\simeq \Z$. 
The actions of the group $G$ on $\Gamma$ and on $\mu_n$ are trivial (the monodromy is trivial 
in a trivial fibration). 
By \cite[I.2.6.b]{serre:galois-cohomology}, 
there is a natural isomorphism $H^1(\widehat{G}, \Gamma) \simeq H^1(G,  \Gamma)$ 
induced by the injection $G \to \widehat{G}$.  
Hence, \eqref{d-unit-family-diagram-1} implies now easily 
the commutative diagram 
in Theorem \ref{t:homotopy-northcott-unit-equation}. 
\par
The detailed descriptions of the involved homomorphisms 
can be found in \cite{phung-19-phd}.  
\par
The last statement of Theorem  \ref{t:homotopy-northcott-unit-equation} 
follows directly from Proposition \ref{p:integer-ring-finite-rank} 
and the fact that the map $\alpha \colon \OO_S^* \to H^1(G, \Z)$ 
is a homomorphism of groups defined by $\alpha(x)=i_x - i_1$ (cf. \eqref{e:definition-of-i-p-main-reduction-step}).

\bibliographystyle{siam}

\end{document}